\documentclass[11pt]{amsart}

\usepackage[colorlinks=true, pdfstartview=FitV, linkcolor=blue, 
      citecolor=blue]{hyperref}
\usepackage{amsfonts, amssymb, amsmath, mdframed, graphicx}

\usepackage{bbm, a4wide}

\pdfminorversion=7

\DeclareMathAlphabet{\mymathbb}{U}{bbold}{m}{n}

\newcommand{\RR}{\mathbb{R}}
\newcommand{\ZZ}{\ts\mathbb{Z}}
\newcommand{\QQ}{\mathbb{Q}}
\newcommand{\CC}{\mathbb{C}}  
\newcommand{\cL}{\mathcal{L}}
\newcommand{\cT}{\mathcal{T}}
\newcommand{\vG}{\varGamma}
\newcommand{\vL}{\varLambda}
\newcommand{\ft}{\mathfrak{t}}

\newcommand{\ts}{\hspace{0.5pt}}
\newcommand{\nts}{\hspace{-0.5pt}}

\newtheorem{theorem}{Theorem}

\newtheorem{lem}[theorem]{Lemma}
\theoremstyle{definition}

\newtheorem{remark}[theorem]{Remark}

\newcommand{\ee}{\ts\mathrm{e}}

\newcommand{\coinc}{\mathrm{coinc}}
\newcommand{\dens}{\mathrm{dens}}
\newcommand{\vol}{\mathrm{vol}}
\newcommand{\ii}{\ts\mathrm{i}}
\newcommand{\bs}{\boldsymbol}

\newcommand{\bmu}{\bs{\mu}}

\newcommand{\tT}{\mathrm{T}}
\newcommand{\tH}{\mathrm{H}}
\newcommand{\tF}{\mathrm{F}}
\newcommand{\tP}{\mathrm{P}}

\newcommand{\fe}{F}

\newcommand{\oplam}{\mbox{\Large $\curlywedge$}}

\newcommand{\exend}{\hfill$\Diamond$}
\newcommand{\defeq}{\mathrel{\mathop:}=}

\newcommand{\myfrac}[2]{\frac{\raisebox{-2pt}{$#1$}}
  {\raisebox{0.5pt}{$#2$}}}

\title[Dynamics and topology of the Hat family of tilings]{Dynamics
  and topology of the \\[2mm]  Hat family of tilings}

\author{Michael Baake}
\address{Fakult\"at f\"ur Mathematik, Universit\"at Bielefeld, \newline
  \indent  Postfach 100131, 33501 Bielefeld, Germany}
\email{$\{$mbaake,gaehler$\}$@math.uni-bielefeld.de}

\author{Franz G\"{a}hler}

\author{Lorenzo Sadun}
\address{Department of Mathematics, Univeristy of Texas, \newline
  \indent 2515 Speedway, PMA 8.100 Austin, TX 78712, USA}
\email{sadun@math.utexas.edu}

\begin{document}

\begin{abstract}
  The recently discovered Hat tiling \cite{Hat} admits a
  $4$-dimensional family of shape deformations, including the
  $1$-parameter family already known to yield alternate monotiles. The
  continuous hulls resulting from these tilings are all topologically
  conjugate dynamical systems, and hence have the same dynamics and
  topology. We construct and analyze a self-similar element of this
  family called the CAP tiling, and we use it to derive properties of
  the entire family. The CAP tiling has pure-point dynamical spectrum,
  which we compute explicitly, and comes from a natural
  cut-and-project scheme with $2$-dimensional Euclidean internal
  space.  All other members of the Hat family, in particular the
  original version constructed from $30$-$60$-$90$ right triangles,
  are obtained via small modifications of the projection from this
  cut-and-project scheme.
\end{abstract}

\keywords{Tiling cohomology, Dynamical spectra, Model sets,
  Deformations, Monotile}
\subjclass[2010]{52C20, 37D40, 55N05, 52C23}

\maketitle

\section{Introduction and results}\label{sec:intro}

In March 2023, David Smith, Joseph Myers, Craig Kaplan and Chaim
Goodman-Strauss announced the construction of an aperiodic monotile
called the \emph{Hat} \cite{Hat}.  They showed that it is possible to
tile the plane with isometric copies of this tile, but only
non-periodically.  In fact, the tilings utilize $12$ tiles up to
translation: the original Hat, a reflected \emph{anti-Hat}, and
rotations of these tiles by multiples of $60$ degrees. There are
essentially two kinds of tilings that result. In one, the Hats and
anti-Hats assemble into four larger `meta-tiles', called $\tT$, $\tH$,
$\tF$ and $\tP$, which in turn form a fusion tiling.  The meta-tiles
assemble into larger `supertiles' whose adjacencies are
combinatorially the same as those of the basic meta-tiles. These
assemble into second order supertiles, which assemble into third order
supertiles, and so on to infinity.  The other construction is similar,
only using reflections of the $\tT$, $\tH$, $\tF$ and $\tP$
meta-tiles.

This result extended earlier results of Taylor \cite{Joan} and of
Socolar and Taylor \cite{ST}; see also \cite[Ex.~6.6]{TAO} for further
details. There, a functional monotile is derived from an inflation
rule with decorated hexagons that defines a unique local
indistinguishability (LI) class of tilings with \emph{perfect local
  rules} \cite[Def.~5.20]{TAO}.  Unfortunately, these local rules are
represented in the form of a decoration of a hexagon that encodes both
nearest-neighbor and next-to-nearest-neighbor information.  A purely
geometric version is possible, but only with a connected prototile
that is not simply connected, hence not disk-like. Both the decorated
hexagon and its reflected copy are needed; they occur with equal
frequency, as is nicely visible from the \emph{Llama} tiling; see
\mbox{\cite[Fig.~6.23]{TAO}}. The unique LI class defined by the
decorated hexagon is reflection symmetric, and the tiling is
limit-periodic, with pure-point spectrum, both in the dynamical and in
the diffraction sense.

The Hat achieves aperiodicity purely through geometry, with no need
for any edge markings, using a tile that is a simple non-convex
lattice polygon with $13$ edges.  Once again, the reflected version of
the Hat is also needed, but this time in such a way that reflection
symmetry is broken. This leads to \emph{two} distinct LI classes that
are mirror images of one another (an enantiomorphic pair).  That is,
if we treat the Hat and the anti-Hat as two versions of the same tile,
the local (matching) rules do not determine a single LI class, but a
pair. It is possible to force a single LI class with local rules (for
instance by declaring that two anti-Hats cannot touch), but then the
rules for the Hat and anti-Hat are different and we have a tiling by
\emph{two} tile types. In this sense, the Hat construction satisfies
the quest for a \emph{disk-like} prototile that enforces aperiodicity,
but at the price of slightly weakening the notion of local rules.

\begin{figure}
\begin{center}
\includegraphics[width=\textwidth]{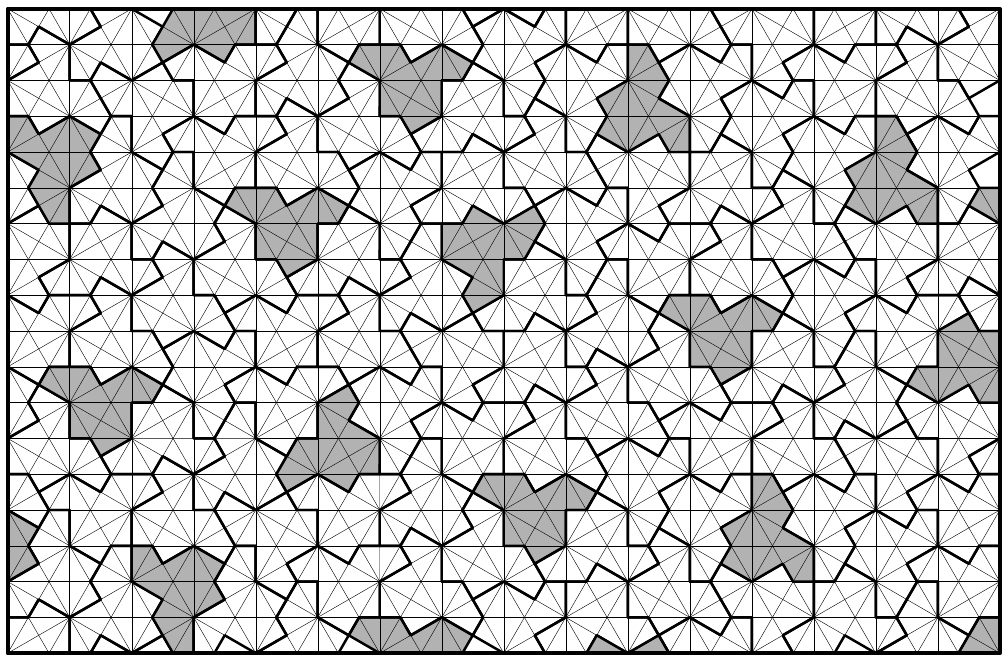}
\end{center}
\caption{\label{fig:Hatpatch}
  Patch of a Hat tiling. The anti-Hats are shaded. It has
  sixfold rotational symmetry, denoted by the cyclic group
  $C_6$, in the sense that each patch occurs in six orientations,
  but has no reflection symmetry.} 
\end{figure}

An intriguing feature of the Hat construction is that it is possible
to vary the relative lengths of the two kinds of edges in the Hat. The
geometry continues to force the tiles to assemble into meta-tiles (or
reflections of meta-tiles) with the same combinatorics as the original
Hat tiling. After rescaling to preserve the average area per tile, and
also rotating, the resultant tilings were observed to have
approximately the same large-scale structure \cite{Hat}.

Our first main result is that the shape changes considered in
\cite{Hat} are part of a $4$-dimensional family of topological
conjugacies between the translation dynamical systems of the family of
Hat tilings. For the connections between them, we need the concepts of
\emph{local derivability} and \emph{mutual local derivability} (MLD);
see \cite[Sec.~5.2]{TAO} for definitions and background.

\begin{theorem}\label{main1}
  The moduli space of shape changes to the Hat tiling, modulo MLD
  equivalence, is\/ $8$-dimensional. Four dimensions correspond to
  rigid linear transformations of\/ $\RR^2$. The other four induce
  topological conjugacies.
\end{theorem}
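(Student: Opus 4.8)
The plan is to reduce the classification of shape changes to a single cohomology computation and then to split that group into its dynamically meaningful pieces. First I would invoke the deformation–cohomology framework of Kellendonk and of Clark--Sadun: for a tiling of finite local complexity, a shape change assigns a vector in $\RR^2$ to each oriented prototile edge, the requirement that the deformed meta-tiles $\tT,\tH,\tF,\tP$ still close up is a cocycle condition, and the local redrawings that do not leave the MLD class are precisely the coboundaries. Hence, near the undeformed tiling, the moduli space of shape changes modulo MLD is canonically identified with $\check H^1(\Omega;\RR^2)\cong\check H^1(\Omega;\RR)\otimes\RR^2$, where $\Omega$ is the translational hull with the $12$ Hat orientations treated as distinct prototiles.

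The central computation is therefore $\check H^1(\Omega;\RR)$. Here I would use the self-similar CAP member, whose hull is homeomorphic to that of the Hat and hence has the same \v{C}ech cohomology, and compute from its inflation via an Anderson--Putnam approximant complex $\vG$, so that $\check H^1(\Omega;\RR)=\varinjlim\bigl(\check H^1(\vG;\RR)\xrightarrow{\phi^\ast}\check H^1(\vG;\RR)\to\cdots\bigr)$. Over $\RR$ this direct limit is the sum of the generalized eigenspaces of the inflation-induced map $\phi^\ast$ for eigenvalues of nonzero modulus, so the task is the spectral analysis of $\phi^\ast$. The expected outcome is $\check H^1(\Omega;\RR)=\RR^4$, which gives $\check H^1(\Omega;\RR^2)=\RR^8$ and the first assertion of the statement.

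To split the $\RR^8$ I would use two canonical structures. On one side, the undeformed tiling supplies a tautological position class spanning a $2$-dimensional subspace $P\subset\check H^1(\Omega;\RR)$; post-composing it with an arbitrary $A\in\mathrm{Hom}(\RR^2,\RR^2)$ realizes every global linear redrawing $v\mapsto Av$ and fills out the $4$-dimensional subspace $P\otimes\RR^2$ of rigid linear transformations. On the other side, the \emph{asymptotically negligible} classes $H^1_{\mathrm{an}}$ form the kernel of the slope (frequency) map $\check H^1(\Omega;\RR^2)\to\mathrm{Hom}(\RR^2,\RR^2)$; by the Clark--Sadun criterion these are exactly the deformations that induce a topological conjugacy commuting with the $\RR^2$-action, whereas the quotient $\check H^1/H^1_{\mathrm{an}}\cong\mathrm{Hom}(\RR^2,\RR^2)=\RR^4$ is represented uniquely by the linear transformations. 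Matching dimensions, $8=4+4$, yields the decomposition. The cut-and-project picture makes the conjugacy directions transparent: they are the $2$-dimensional internal space tensored with the target $\RR^2$, and moving along them is precisely a small modification of the projection.

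The main obstacle is the cohomology computation, and in particular controlling the orientation-reversing symmetry: because the Hat genuinely requires reflected tiles, the inflation is not a pure translational substitution, so I would have to set up $\vG$ and $\phi^\ast$ with care — tracking all $12$ orientations, or passing to an appropriate symmetry cover — to be certain the free rank is $4$ and not larger. A secondary point to verify is that every class in $\check H^1(\Omega;\RR^2)$ is actually \emph{realized} by an honest, globally consistent retiling for small parameters; this follows from finite local complexity together with the closing conditions, and it is what upgrades the infinitesimal $\RR^8$ to a genuine $8$-dimensional moduli space.
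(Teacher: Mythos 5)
Your proposal follows essentially the same route as the paper: the Clark--Sadun identification of shape changes modulo MLD with $\check H^1(\Omega_{\cT},\RR^2)$, an Anderson--Putnam computation giving $\RR^8$ via the direct limit under substitution, and the identification of the $4$-dimensional asymptotically negligible (equivalently zero-slope, equivalently contracting) subspace with topological conjugacies, complemented by the $4$-dimensional family of linear transformations. The one place where the paper is simpler than you anticipate is the reflection issue: it works with the four meta-tiles $\tT,\tH,\tP,\tF$ in six rotations each (reflections never occur at the meta-tile level, and the anti-meta-tile hull is just the mirror image of the meta-tile hull), after first verifying border forcing so that the uncollared AP complex can be used.
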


In particular, the translation dynamics of the $\RR^2$ action on the
hull of every version of a Hat tiling is topologically conjugate, up
to linear transformation (possibly including reflection), to the
translation dynamics on the hull of every other version of a Hat
tiling. From a dynamical systems perspective, there is only \emph{one}
Hat tiling. Note that a Hat-dominated tiling is reflected into an
anti-Hat dominated one, but since $\mathrm{GL}^{}_2(\RR)$ is not
connected, the two cannot be deformed continuously into one another.

Some of the shape changes violate the symmetries of the original
construction. The family of shape changes that respect rotational
symmetry is only $4$-dimensional and can be coded in two complex
numbers, where we identify $\RR^2$ with $\CC$. One complex number
parameterizes uniform rotations and rescalings, while the other
parameterizes shape conjugacies.  If we further insist that the shape
changes respect reflectional symmetry, the two complex parameters must
in fact be real. One represents rescalings and the other is exactly
the ratio of edge lengths described in \cite{Hat}.

Within the family of shape deformations is a \emph{self-similar}
tiling, which we call the CAP tiling (because of its relation to the
Hat and its simultaneous Cut-And-Project structure, proved
below). With this set of shape parameters, the CAP tiling can be
expressed as a \emph{geometric} inflation tiling, not only a
combinatorial one, although not one with a stone
inflation.\footnote{In a stone inflation, each super-meta-tile would
  have a footprint exactly $\phi^2$ larger than the original
  meta-tile, where $\phi = (1+\sqrt{5}\, )/2$ is the golden ratio
  (sometimes called $\tau$).} However, the CAP tiling is MLD to a
tiling by fractiles with an exact stone inflation. There are many
tools available for studying self-similar tilings, especially those
whose inflation factors are Pisot--Vijayaraghavan (PV) numbers, and we
obtain the following result.

\begin{theorem}\label{main2}
  The dynamical spectrum of the CAP tiling is pure point. Up to scale
  and rotation, as detailed in Eq.~\eqref{eq:spec} and
  Section~\textnormal{\ref{sec:compare}}, it is given by\/
  $\ZZ[\xi,\phi]$, where\/ $\xi = \exp(\pi \ii/3)$ is a primitive
  sixth root of unity. The CAP tiling itself is a cut-and-project
  tiling with internal space\/ $\RR^2$. With an appropriate choice of
  control points for the four meta-tiles, the total window is the
  hexagon shown in Figure~\textnormal{\ref{fig:window}}, with
  subdivisions for the four types.
\end{theorem}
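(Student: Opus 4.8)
The plan is to realise the control points of the CAP tiling as a regular model set and to read off both the spectrum and the windows from the associated cut-and-project scheme (CPS). First I would fix, once and for all, the control points of the four meta-tiles and normalise the overall scale and orientation so that the geometric inflation acts on the physical space $\CC \cong \RR^2$ as multiplication by a fixed $\lambda \in \ZZ[\xi,\phi]$ whose modulus is the appropriate power of $\phi$. Since the relative displacements of the meta-tiles inside each supertile are themselves elements of $\ZZ[\xi,\phi]$ (this is where the self-similar, rather than merely combinatorial, nature of the CAP inflation is used), an induction over inflation levels shows that every control point lies in $\cL \defeq \ZZ[\xi,\phi]$, a $\ZZ$-module of rank $4$. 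The residual freedom in placing the four control points is exactly what will be spent to make the total window the displayed hexagon.

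Next I would build the CPS. The field $\QQ(\xi,\phi) = \QQ(\sqrt{-3},\sqrt5)$ carries the Galois automorphism fixing $\xi$ and sending $\phi \mapsto 1-\phi = -\phi^{-1}$; extending it $\CC$-linearly over $\ZZ[\xi]$ gives the star map $\star\colon \cL \to \CC$, $a + b\phi \mapsto a + b(1-\phi)$ with $a,b \in \ZZ[\xi]$. Because $\star$ fixes the sixth root of unity $\xi$, the internal space is again a copy of $\CC$ carrying the same $C_6$ rotation action, which is precisely the assertion that the internal space is $2$-dimensional Euclidean. The graph $L = \{(z,z^\star) : z \in \cL\} \subset \CC \times \CC$ is then a lattice of rank $4$ in $\RR^4$ whose two projections are dense and injective respectively, so that $(\CC,\CC,L)$ is a genuine CPS.

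The windows are forced by the inflation. Writing $\Lambda_X$ for the set of control points of meta-tile type $X \in \{\tT,\tH,\tF,\tP\}$, the geometric inflation gives exact relations $\Lambda_X = \bigcup_{Y} (\lambda\,\Lambda_Y + d_{XY})$ for suitable digit sets $d_{XY} \subset \cL$. Applying $\star$ turns multiplication by $\lambda$ into multiplication by $\lambda^\star$, a contraction of internal space since $|\lambda^\star| = |\lambda|^{-1} = \phi^{-k} < 1$ — this is exactly where the Pisot (indeed unit) property of $\phi$ enters. Hence the closures $W_X \defeq \overline{\Lambda_X^{\,\star}}$ satisfy a graph-directed iterated function system of contractions, of which they are the unique compact solution. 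Solving this IFS explicitly should produce the four windows; their union is the hexagon of Figure~\ref{fig:window}, subdivided by type, and its $C_6$-symmetry reflects the $\star$-invariance of $\xi$.

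Finally, to conclude pure-pointness I would verify that $\Lambda = \bigcup_X \Lambda_X$ is a \emph{regular} model set, i.e.\ that the $W_X$ have mutually disjoint interiors covering the hexagon and that each $\partial W_X$ has Lebesgue measure zero. By the standard theory of regular model sets (Schlottmann, Baake--Moody) such sets have pure-point dynamical and diffraction spectrum, with the group of eigenvalues given by the projection to physical space of the dual lattice $L^{*}$; for the self-similar order $\cL$ this dual module is commensurate with $\cL$, yielding the spectrum $\ZZ[\xi,\phi]$ up to scale and rotation asserted in Eq.~\eqref{eq:spec}. The main obstacle is this last step: proving that the IFS attractor has negligible boundary and that the four pieces meet only in null sets, so that the cut-and-project description is \emph{exact} rather than approximate. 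For the golden-ratio contraction such regularity is expected but must be established carefully — e.g.\ via the overlap/coincidence algorithm applied to the exact stone-inflation fractile model that is MLD to the CAP tiling, where $\star$-contractivity is what makes the relevant boundary estimates converge.
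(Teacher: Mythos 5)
Your proposal follows essentially the same architecture as the paper's proof: lift the control points (after rescaling so that the return module becomes $\ZZ[\xi,\phi]$) to a rank-$4$ Minkowski-type embedding lattice in $\RR^2\times\RR^2$, derive the window iterated function system from the exact inflation relations $\vL_i=\bigcup_j \phi^2\vL_j+D_{ij}$ by applying the $\star$-map, identify the hexagonal attractor, and read off the spectrum as the projection of the dual lattice. Your choice of Galois automorphism (conjugating $\phi$ while fixing $\xi$, rather than the paper's joint action with $\xi\mapsto\xi^5$) is one of the equivalent embeddings the paper explicitly allows, so that is not an issue.

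The one place where you diverge is the step you yourself flag as the main obstacle: showing that the cut-and-project description is \emph{exact}, i.e.\ that $\vL_i$ exhausts $\oplam(W_i)$ up to density zero and that the $W_i$ are topologically regular with null, mutually interior-disjoint boundaries. You propose to settle this with the overlap/coincidence algorithm on the fractile stone inflation; that is precisely the paper's first, computer-assisted route (via Akiyama--Lee), and it does work, but it is not a pencil-and-paper argument. The paper's self-contained alternative, which your plan lacks, is a density comparison: compute $\dens(\cL)\cdot\vol(W)$ from the covolume of the embedding lattice and the area of the hexagon, and independently compute the control-point density from the Perron--Frobenius frequency vector and the polygonal tile areas; their equality ($\rho_1=\rho_2=\frac{2}{5}\sqrt{3}\,\phi^2$) forces $\vL$ to coincide with the regular model set $\oplam(W)$ up to a set of zero density, whence pure-point diffraction and, by the equivalence theorem of Lee--Moody--Solomyak and Baake--Lenz, pure-point dynamical spectrum. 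If you want a proof that does not defer to a computer search, you should add this density count; otherwise your outline is sound.
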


The projection structure of the CAP tiling, respectively its control
point set, is robust under topological deformations as follows.

\begin{theorem}\label{main2a}
  Every tiling that is topologically conjugate to the CAP tiling is
  MLD to a reprojection of the CAP tiling control points. That is, it
  is MLD to a cut-and-project set with the same total $($or
  embedding$\,)$ space and the same acceptance domain as the CAP
  tiling point set, only with a different projection from the total
  space to\/ $\RR^2$. Equivalently, it is MLD to a deformed model set
  with a linear deformation function on the window.
\end{theorem}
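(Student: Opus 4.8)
The plan is to combine the classification of shape deformations from Theorem~\ref{main1} with the model-set description from Theorem~\ref{main2}, exploiting the principle that topological conjugacies of tiling spaces are, up to MLD, shape deformations. Write the CAP tiling as a regular model set from a cut-and-project scheme $(\RR^2,\RR^2,\cL)$ with lattice $\cL \subset \RR^2 \times \RR^2 = \RR^4$, physical projection $\pi$, internal projection $\pi_{\mathrm{int}}$, and window the hexagon $W$ of Figure~\ref{fig:window}. Let $\Omega$ be its hull. By Theorem~\ref{main2} the spectrum is pure point and equal to $\ZZ[\xi,\phi]$, so the maximal equicontinuous factor of $\Omega$ is the torus $\mathbb{T} = \RR^4/\cL$, and the torus parametrization $\beta \colon \Omega \to \mathbb{T}$ is a continuous, almost everywhere one-to-one factor map intertwining the $\RR^2$-action with translation by the physical subspace. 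The tiling is recovered from the pair $(\mathbb{T}, W)$, the window fixing the combinatorics along the fibres of $\beta$.

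First I would verify the easy inclusion: reprojections yield topological conjugacies. For a linear map $A \in \mathrm{Hom}(\RR^2,\RR^2)$ put $\pi^{}_A = \pi + A \circ \pi_{\mathrm{int}}$ and form the deformed point set $\{\, \pi^{}_A(x) : x \in \cL,\ \pi_{\mathrm{int}}(x) \in W \,\}$. This leaves the embedding space $\RR^4$, the lattice $\cL$, and the acceptance domain $W$ untouched, tilting only the physical plane inside $\RR^4$; it is exactly a deformed model set with the linear deformation function $A$ restricted to $W$. The displacement $A \circ \pi_{\mathrm{int}}$ attached to each control point is a continuous function of its image under $\beta$, so the deformation is pulled back from the equicontinuous factor $\mathbb{T}$. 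Since shape deformations that factor through the equicontinuous factor are topological conjugacies, each reprojected tiling is topologically conjugate to the CAP tiling. As $A$ ranges over $\mathrm{Hom}(\RR^2,\RR^2)$ we obtain a $4$-parameter family.

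For the converse, let $\Omega'$ be topologically conjugate to $\Omega$. By the deformation theory of Clark and Sadun, any such conjugacy is, up to MLD, a shape deformation of $\Omega$, hence represents a class in $\check{H}^1(\Omega,\RR^2)$. Theorem~\ref{main1} identifies this space of shape changes, modulo MLD, as eight-dimensional, with four dimensions accounted for by rigid linear transformations of $\RR^2$ and the remaining four inducing the nontrivial topological conjugacies. Modulo the rigid linear part, the conjugacy is therefore governed by a four-dimensional space. The reprojections of the previous paragraph already supply a four-dimensional family of window-preserving, pairwise MLD-inequivalent conjugacies. Matching dimensions, the reprojection classes exhaust the four conjugacy directions, so $\Omega'$ is MLD to a reprojected model set with the same embedding space $\RR^4$, the same torus $\mathbb{T}$, and the same acceptance domain $W$; equivalently, to a deformed model set whose deformation function on $W$ is linear.

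I expect the main obstacle to be precisely this matching step: a priori one of the four conjugacy directions of Theorem~\ref{main1} could be realized only by an automorphism of $\mathbb{T}$ that shears the internal coordinates and thereby deforms the window $W$, changing the acceptance domain rather than the projection. The resolution is to show that the window-preserving reprojections are genuinely nontrivial and linearly independent in $\check{H}^1(\Omega,\RR^2)$ modulo the rigid classes, so that they already span the full conjugacy space and no conjugacy direction can require a change of window. Carrying this out rigorously requires checking that distinct $A$ give non-MLD tilings---which follows because the induced displacement fields are distinct continuous functions on $\mathbb{T}$ and cannot be removed by a local, pattern-equivariant matching---and then invoking the dimension count from Theorem~\ref{main1} to conclude that reprojections leave no residual conjugacy freedom.
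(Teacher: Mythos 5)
Your proposal is correct and follows essentially the same route as the paper: identify the reprojections $\pi + L\circ\pi_{\mathrm{int}}$ with a $4$-dimensional space of asymptotically negligible classes in $\check{H}^1(\Omega_{\cT},\RR^2)$ (the paper phrases this via the coboundaries of the weakly pattern-equivariant coordinates of $x^{\star}$, you via factoring through the maximal equicontinuous factor, which amounts to the same boundedness argument), and then conclude by the dimension count from Theorem~\ref{main1} together with the Clark--Sadun correspondence that these exhaust all shape conjugacies up to MLD. The ``main obstacle'' you flag---linear independence of the reprojection classes modulo MLD---is exactly the point the paper also asserts, with essentially the same justification.
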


Here, topological conjugacy refers to the existence of a homeomorphism
that commutes with the translation action of $\RR^2$ on the tilings.
Let us now turn to the topology of the continuous hull $\Omega_{\cT}$
of a Hat tiling, where $\Omega_{\cT}$ refers to the closure of the
$\RR^2$-orbit of the tiling in the standard local topology
\cite{TAO,tilingsbook}. The answer is the same for all choices of
shape, and we use the standard Hat as a representative for all of
them.

\begin{theorem}\label{main3}
  Let\/ $\cT$ be a Hat tiling and\/ $\Omega_{\cT}$ its continuous
  hull. Then, the \v{C}ech cohomology groups of\/ $\Omega_{\cT}$ are
\begin{equation*}
   \check{H}^0(\Omega_{\cT}, \ZZ)  =  \ZZ \, ,  \quad 
   \check{H}^1(\Omega_{\cT}, \ZZ)  =   \ZZ^4  ,
      \quad \text{and} \quad
   \check{H}^2(\Omega_{\cT}, \ZZ)  =  \ZZ^{10}. 
\end{equation*}
This can further be decomposed into four real representations of the
cyclic group $C_6$.  This decomposition, and the way each component
transforms under substitution, is given in
Table~\textnormal{\ref{table:coho}}.
\end{theorem}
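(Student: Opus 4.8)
The plan is to exploit self-similarity and reduce to a single, computationally tractable representative. By Theorem~\ref{main1} every Hat tiling is topologically conjugate to the CAP tiling, and by the discussion preceding Theorem~\ref{main2} the CAP tiling is MLD to a fractile tiling carrying an exact stone inflation. Since Čech cohomology of the hull is invariant under both topological conjugacy and MLD equivalence (each yields a homeomorphism of hulls intertwining the $\RR^2$-action), it suffices to compute $\check H^*(\Omega_{\cT},\ZZ)$ for this fractile representative. For a primitive, aperiodic, and hence recognizable stone inflation one has the Anderson--Putnam presentation $\Omega_{\cT}\cong\varprojlim(\vG,\gamma)$, where $\vG$ is a finite CW-complex assembled from the (collared) prototiles and $\gamma\colon\vG\to\vG$ is the cellular map induced by the inflation. Čech cohomology then becomes the direct limit
\[
  \check H^n(\Omega_{\cT},\ZZ)\;\cong\;\varinjlim\bigl(H^n(\vG,\ZZ),\gamma^*\bigr),\qquad n=0,1,2 .
\]

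First I would build $\vG$ explicitly. The four meta-tiles $\tT,\tH,\tF,\tP$ together with their mirror images, taken up to translation in all six orientations, furnish the $2$-cells, while the $1$-cells and $0$-cells record the edge- and vertex-adjacencies that actually occur. Because the raw prototiles do not force their border, I would pass to collared tiles (equivalently, use the fractiles, which are designed so that the inflation is a genuine stone inflation and therefore recognizable), guaranteeing that $\gamma$ is well defined and cellular. Minimality of the hull gives $H^0(\vG,\ZZ)=\ZZ$ with $\gamma^*=\mathrm{id}$, so $\check H^0=\ZZ$ immediately. The groups $H^1(\vG,\ZZ)$ and $H^2(\vG,\ZZ)$ are then read off from the cellular cochain complex $C^0\to C^1\to C^2$ by finite linear algebra over $\ZZ$.

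Next I would compute the endomorphism $\gamma^*$ on $H^1$ and $H^2$ and pass to the direct limit. As the inflation factor is golden-mean related, both groups are naturally $\ZZ[\phi]$-modules on which $\gamma^*$ acts through the Perron data, so the rank of $\check H^n$ equals the eventual rank of $\gamma^*$ (the rank of $(\gamma^*)^k$ for $k\gg 0$); one then verifies that the transient kernels have died out and that no torsion survives, yielding $\check H^1=\ZZ^4$ and $\check H^2=\ZZ^{10}$. To obtain the refined statement I would carry the $C_6$-action --- rotation of tilings by $60^\circ$, which preserves the hull since every patch occurs in six orientations --- through the whole construction: it acts cellularly on $\vG$, commutes with $\gamma$, and hence splits each $\check H^n$ into isotypic components for the four irreducible real representations of $C_6$ (the trivial and sign characters and the two rotation planes). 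Recording how $\gamma^*$ acts on each component produces Table~\ref{table:coho}.

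The main obstacle is the combinatorial bookkeeping for $\vG$ in the presence of reflections: getting the collared adjacency data correct so that the border is genuinely forced and $\gamma$ is cellular, and then diagonalizing $\gamma^*$ in a $C_6$-equivariant way without introducing spurious torsion in the direct limit. A strong independent check is that $\check H^1$ has rank $4$: the space of shape deformations modulo MLD is $H^1(\Omega_{\cT};\RR^2)\cong H^1(\Omega_{\cT};\RR)\otimes_{\RR}\RR^2$, of real dimension $2\cdot 4=8$, which is exactly the dimension of the moduli space in Theorem~\ref{main1}, with the $\mathrm{GL}_2(\RR)$-summand accounting for four of the eight dimensions and the conjugacy-inducing deformations for the other four. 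A secondary check is that the equivariant ranks in Table~\ref{table:coho} sum to $4$ and $10$ and are compatible with the eigenvalue structure of the inflation; and, where feasible, the cut-and-project description of Theorem~\ref{main2} offers an orthogonal route to the same groups via the cohomology of the internal space with its hexagonal window.
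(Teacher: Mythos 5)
Your overall strategy --- an Anderson--Putnam complex, a direct limit under substitution, and a $C_6$-equivariant splitting --- is the same as the paper's, but two steps in your setup are wrong as stated. First, the $2$-cells of $\vG$ must be the four meta-tiles $\tT,\tH,\tP,\tF$ in their six orientations \emph{only}; you must not include their mirror images. A single Hat tiling uses either the meta-tiles or the reflected meta-tiles, never both, so the two LI classes give two separate (homeomorphic) hulls, and the theorem concerns the cohomology of one of them. Adjoining the reflected meta-tiles as extra $2$-cells produces a complex with no adjacencies between the two halves, hence a disconnected $\vG$ with $H^0=\ZZ^2$ and doubled $H^1$ and $H^2$ --- not the stated answer. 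Second, you assert that the raw prototiles do not force the border and propose to collar, claiming this is ``equivalent'' to passing to the fractiles with their stone inflation. These are different issues: fractilization repairs the \emph{geometry} (making the inflation a stone inflation) but has no bearing on border forcing, and recognizability is not the property that licenses the uncollared complex. The key lemma you are missing is Lemma~\ref{lem:BFnew}: the meta-tile substitution \emph{does} force the border, as shown by exhibiting enforced patches extending a positive margin beyond each level-one supertile. This is precisely what justifies the uncollared AP complex with only $24$ faces, $27$ edges and $10$ vertices; collaring would eventually give the same answer but at the cost of a much larger and less transparent computation.

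Two smaller points. The reduction to the CAP/fractile representative via Theorem~\ref{main1} is unnecessary: the AP machinery applies verbatim to combinatorial substitutions and fusions, so one works with the standard Hat meta-tiles directly. Your direct-limit step (``eventual rank of $\gamma^*$'', ``no torsion survives'') is exactly where divisibility could creep in (compare $\varinjlim(\ZZ,\times 2)=\ZZ[1/2]$); the paper closes this cleanly by observing that substitution acts on $H^1(\vG,\ZZ)$ and $H^2(\vG,\ZZ)$ by \emph{unimodular} integer matrices (the product of the eigenvalues in Table~\ref{table:coho} is $1$), so the direct limits are literally $\ZZ^4$ and $\ZZ^{10}$. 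Relatedly, the paper warns that decomposing \emph{integer} cohomology by representation is only guaranteed to capture a finite-index subgroup; it therefore runs the full integer computation with the $27\times 24$ and $10\times 27$ boundary matrices first, and only afterwards identifies the summands $\ZZ[r]/(r-1)$, $\ZZ[r]/(r+1)$, $\ZZ[r]/(r^2-r+1)$ and $\ZZ[r]/(r^2+r+1)$. Your plan to diagonalize equivariantly from the outset would need the same care.
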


\begin{table}
  \caption{The integer cohomology of $\Omega_\cT$ by representation
    and how it transforms under substitution. \label{table:coho}}
\renewcommand{\arraystretch}{1.2}  
\begin{tabular}{|c|c|c|c|c|}\hline 
   Representation
   &$r=1$ & $r=\xi^{\pm 1}$ & $r=\xi^{\pm 2}$ & $r=-1$ \\ \hline
   \hline
   $\check{H}^0(\Omega_{\cT},\ZZ)$ & $\ZZ$ & 0 & 0 & 0 \\ 
   Eigenvalues & 1 & && \\ \hline 
   $\check{H}^1(\Omega_{\cT},\ZZ)$ & 0 & $\ZZ^4$ & 0 & 0 \\ 
   Eigenvalues & & $\phi^{\pm 2}$ && \\ \hline 
   $\check{H}^2(\Omega_{\cT},\ZZ)$ & $\ZZ^2$ & $\ZZ^4$ & $\ZZ^2$ & $\ZZ^2$ \\ 
   Eigenvalues & $\phi^{\pm 4}$ & $\phi^{\pm 2}$ & $\xi^{\pm 1}$
     & $\phi^{\pm 2}$ \\ 
\hline
\end{tabular}
\end{table} 

The triangular lattice $\ZZ[\xi]$ appears in many places in this
analysis.  In particular, there are four special versions of the Hat,
namely the Chevron, the Hat itself, the Turtle and the Comet, in which
the vertices all lie on the lattice $c\ts \ZZ[\xi]$ for some complex
number $c$. The different values of $c$ are related by powers of
$\sqrt{5}$, $\phi - \xi$ and $\phi - \xi^5$.

\begin{remark}
  The elements of $\ZZ[\xi]$ are the Eisenstein integers, which are
  the integers in the quadratic field $\QQ (\sqrt{-3}\,)$, while the
  ring $\ZZ[\xi,\phi]$ contains integers from the number field
  $\QQ (\sqrt{-3},\sqrt{5}\,)$, but not all of them. This quartic
  field is Galois, with class number one, and has many interesting
  properties\footnote{See entry $4.0.225.1$ of $\,$
    \texttt{https://www.lmfdb.org/NumberField} $\,$ for some
    details.}, though we suppress a systematic use of them.  \exend
\end{remark}

The complex numbers $\phi-\xi^5$ and $\phi - \xi$ have norm $\sqrt{2}$
and argument $\pm \arctan \bigl(\sqrt{3/5\ts}\, \bigr)$. Additional
powers of $\phi - \xi^5$ and $\phi - \xi$ arise in the calculation of
the dynamical spectrum. As a result, the spectra of the Chevron, Hat
and Comet (but not the Turtle) are twisted relative to the lattices of
these tilings.  This twisting is different (in fact opposite) for the
tiling by meta-tiles $\tT$, $\tH$, $\tP$ and $\tF$ and the tiling by
the reflected meta-tiles. That is, the two different LI classes of the
Chevron, Hat and Comet tilings have different spectra. A similar
result holds for almost all choices of the Hat shape.

\begin{theorem}\label{thm:reflect}
  For all but two values of\/ $\alpha/\beta$, where\/ $\alpha$ and\/
  $\beta$ are the lengths of the two kinds of edges in the Hat, the
  spectra of the two LI classes of tilings built from Hats and
  anti-Hats in standard orientation are distinct.
\end{theorem}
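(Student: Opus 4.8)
The plan is to realize both LI classes as reprojections of one model set and then compare their Fourier modules, which for a pure-point system coincide with the dynamical spectra. By Theorem~\ref{main2a}, every member of the Hat family in either LI class and at any edge ratio $\alpha/\beta$ is MLD to a cut-and-project set living in the fixed embedding space with the fixed lattice $\mathcal{L}$ of Theorem~\ref{main2}, differing only in the projection $\pi_s\colon\mathcal{L}\to\RR^2$ onto physical space. First I would write the spectrum of the standard class as $S_+(s)=\pi_s^{\star}(\mathcal{L}^{*})$, the physical projection of the dual lattice, and record that by Theorem~\ref{main2} this is, up to an overall complex scale and rotation, the module $M=\ZZ[\xi,\phi]$. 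The key structural input is that $M$ is stable under complex conjugation, since $\bar\xi=\xi^{5}\in\ZZ[\xi]$ and $\phi\in\RR$, and that the only $\lambda\in\CC$ with $\lambda M=M$ and $\lvert\lambda\rvert=1$ are the sixth roots of unity $\xi^{j}$, because the modulus-one units of the ring $\ZZ[\xi,\phi]$ are exactly its roots of unity.

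Second, I would pin down the relation between the two LI classes. Since they are enantiomorphic, the reflected class returned to standard orientation is the standard one composed with a reflection and a fixed, $s$-independent rotation $\ee^{\ii\gamma}$; hence $S_-(s)=\ee^{\ii\gamma}\,\overline{S_+(s)}$. Thus $S_+(s)=S_-(s)$ holds exactly when the $\RR$-linear map relating $S_+(s)$ and $S_-(s)$ lies in the discrete group of $\RR$-linear automorphisms preserving $M$. Because the internal space is Euclidean and both classes undergo the same shape deformation, this comparison is conformal: writing $S_+(s)=\ee^{\ii\psi(s)}\rho(s)\,M$ and using $\overline M=M$, the condition reduces to $\ee^{\ii(2\psi(s)-\gamma)}M=M$, with the scale $\rho(s)$ cancelling because reflection preserves moduli. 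By the structural remark this means $2\psi(s)-\gamma\equiv 0\pmod{60^{\circ}}$, that is, $\psi(s)$ lies on a fixed arithmetic progression of spacing $30^{\circ}$.

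Third, I would compute the twist $\psi(s)$ explicitly from the reprojection. The one-parameter deformation that varies $\alpha/\beta$ acts linearly on the embedding space and hence moves $\pi_s$, and the induced rotation of $\pi_s^{\star}(\mathcal{L}^{*})$ relative to $M$ is read off from the arguments of the generators; here the numbers $\phi-\xi$ and $\phi-\xi^{5}$, of modulus $\sqrt2$ and argument $\mp\arctan\!\bigl(\sqrt{3/5\ts}\,\bigr)$, enter because they govern how the projection twists as the edge ratio changes. Establishing that $\psi$ is continuous in $s$ and tracking its total variation as $s$ ranges over $(0,\infty)$, I would show that $\psi(s)$ meets the $30^{\circ}$-progression in exactly two points, one of them the Turtle, which is untwisted ($\psi=0$) and at which the two classes therefore coincide. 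This produces exactly two exceptional values of $\alpha/\beta$ and proves the claim.

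The main obstacle is the third step: obtaining $\psi(s)$ in closed form and proving that it crosses the $30^{\circ}$-progression in precisely two points rather than one or three. The difficulty is twofold. One must confirm that the comparison of the two classes is genuinely conformal, so that only the rotation, and not a shear, can differ; this rests on the Euclidean internal space of Theorem~\ref{main2} and on the common deformation affecting both classes identically, leaving only the enantiomorphic twist. One must also exclude spurious coincidences coming from the infinitely many units $\phi^{n}$ of $\ZZ[\xi,\phi]$: these have modulus $\neq 1$, so they can never supply the pure rotation demanded by $\ee^{\ii(2\psi-\gamma)}M=M$, which is exactly why the admissible symmetries collapse to the six roots of unity and the solution count stays finite. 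Bounding the range of $\psi(s)$ so that it contains exactly two grid points is where the explicit geometry of the Hat deformation must be brought to bear.
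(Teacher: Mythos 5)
Your overall strategy is the same as the paper's: both LI classes have spectrum equal to a rotated and scaled copy of the module $\ZZ[\xi,\phi]$, reflection sends the twist angle $\psi$ to $-\psi$, the module is invariant under complex conjugation and under exactly the sixth roots of unity among modulus-one multipliers, and hence the two spectra coincide precisely when $\psi$ lies in $30^{\circ}\ts\ZZ$. Your reduction in steps one and two is sound (and your observation that units of modulus $\neq 1$ such as $\phi^{n}$ cannot cause spurious coincidences is a correct, if implicit, ingredient of the paper's argument). The comparison being conformal is also justified for the reason you give: shape changes respecting rotational symmetry live in the $r=\xi$ representation, whose expanding eigenspace is one complex dimension, so the large-scale structure of each class is encoded by a single complex number.

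The genuine gap is your third step, which is where the theorem's actual content --- ``all but \emph{two} values'' --- lives. You propose to establish the count by a continuity/total-variation argument for $\psi(s)$, but you never produce $\psi$ or bound its range, and without that the conclusion ``exactly two crossings'' is an assertion, not a proof. The paper closes this gap with a short explicit computation: the shape class of the $(\alpha,\beta)$ tiling pairs with the expanding right eigenvector to give $\xi\phi^{2}\bigl(\alpha+\ii\beta(\phi-\xi)\bigr)$, the spectrum is then rotated by the argument of $\bigl(\alpha+\ii\beta(\phi-\xi)\bigr)(\phi-\xi)=\tfrac12\bigl(\sqrt{5}\ts(\alpha+\sqrt{3}\ts\beta)+\ii\ts(\beta-\sqrt{3}\ts\alpha)\bigr)$, and the multiple-of-$30^{\circ}$ condition becomes the requirement that the ratio $(\beta-\sqrt{3}\ts\alpha)/\bigl(\sqrt{5}\ts(\alpha+\sqrt{3}\ts\beta)\bigr)$ equal one of $0$, $\pm1/\sqrt{3}$, $\pm\sqrt{3}$. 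Since $\alpha,\beta\ge 0$, this ratio is confined to $\bigl[-\sqrt{3/5},\ts 1/\sqrt{15}\ts\bigr]$, which contains only $0$ (the Turtle, $\beta=\sqrt{3}\ts\alpha$) and $-1/\sqrt{3}$ (the unnamed shape, $\beta=\tfrac{\sqrt{5}-2}{\sqrt{3}}\ts\alpha$). You would need to supply this computation, or an equivalent monotonicity argument for the ratio as a function of $\beta/\alpha$, to turn your outline into a proof; note also that a crossing-counting argument must rule out the values $\pm\sqrt{3}$ and $+1/\sqrt{3}$ globally, not merely locally, which the explicit range bound does in one line.
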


When the spectrum of one LI class agrees with that of its reflected
copy, we know that the corresponding dynamical systems are
measure-theoretically isomorphic, by the Halmos--von Neumann theorem,
which disregards sets of measure zero. However, this does not imply
that the two systems are also topologically conjugate. In fact they
are not for these two tilings, as we explain in Remark~\ref{rem:not}.
\smallskip

The organization of this paper is as follows. In
Section~\ref{sec:deform}, we consider shape deformations of the
Hat. These are classified, up to MLD equivalence, by the vector-valued
\v{C}ech cohomology group $\check{H}^1(\Omega_{\cT}, \RR^2)$, which we
compute. We identify generators of $\check{H}^1(\Omega_{\cT}, \RR^2)$
with linear transformations and topological conjugacies, thereby
proving Theorem~\ref{main1}.  We then compute the integer-valued
\v{C}ech cohomology of $\Omega_{\cT}$, proving Theorem~\ref{main3}.

In Section~\ref{sec:SST}, we construct the self-similar CAP tiling,
whose topological dynamical system $(\Omega_{\cT}, \RR^2)$ is strictly
ergodic by standard arguments.  We show that it has pure-point
dynamical spectrum, and we compute it explicitly. We then define
control points and show how to view the resulting Delone set as a
regular cut-and-project set, proving
Theorem~\ref{main2}. Theorem~\ref{main2a} then follows from the
cohomology calculations of Section~\ref{sec:deform}.

In Section~\ref{sec:compare}, we compare several versions of the Hat
tiling, in particular the Chevron, the original Hat, the Turtle, the
Comet and the CAP. These are all topologically conjugate, up to
(complex) rescaling and rotation, where we compute the necessary
rescalings, which all involve powers of $\phi -\xi$ and
$\phi - \xi^5$.  Combined with the results of Section~\ref{sec:SST},
this yields the spectrum of the tilings obtained from each shape of
monotile, proving Theorem~\ref{thm:reflect}. We close with some
comments, observations and questions in Section~\ref{sec:final}.

\section{Shape deformations of the Hat}\label{sec:deform}

Given any tiling in $d$ dimensions, we can study changes to the shapes
and sizes of the tiles that allow the tiles to fit together in the
exact same combinatorial patterns.  Some of these changes result in
tilings that are MLD to the original one.  Infinitesimally, shape
changes modulo MLD equivalence of a tiling $\cT$ are parameterized by
$\check{H}^1(\Omega_{\cT}, \RR^d)$, where $\Omega_{\cT}$ is the orbit
closure (continuous hull) of the tiling $\cT$; see \cite{CS1}.  Since
$d=2$ and we identify $\RR^2$ with $\CC$, we must compute the
complex-valued cohomology of the hull of each tiling. There are two
such hulls, depending on whether the tiling involves meta-tiles or
reflected meta-tiles. Since these hulls are homeomorphic (being
reflections of one another), it suffices to work with a tiling by
ordinary meta-tiles.

The proof of Theorem~\ref{main1} proceeds by showing that
$\check{H}^1(\Omega_{\cT}, \CC)$ for the Hat tiling has complex
dimension $4$ (hence real dimension $8$) and that the
\emph{asymptotically negligible} subspace corresponding to topological
conjugacies has complex dimension $2$ (hence real dimension $4$). Most
of this section is devoted to computing this cohomology.
\smallskip

One standard procedure for computing the cohomology of a substitution
tiling was developed by Anderson and Putnam \cite{AP}. We construct a
branched manifold $\vG \nts$, called the Anderson--Putnam (AP)
complex, from one representative of each type of tile in the tiling,
with appropriate edge identifications. Specifically, whenever there is
a place in the tiling where tiles of type $\ft_1$ and $\ft_2$ meet
along an edge, the corresponding edges of $\ft_1$ and $\ft_2$ are
identified in $\vG\nts$.  Substitution induces a map from this
complex to itself. As long as the substitution meets a technical
condition called \emph{forcing the border} \cite{kel} (defined below),
the tiling space is homeomorphic to the inverse limit of the AP
complex under substitution, and the \v{C}ech cohomology
$\check H^k(\Omega_\cT)$ of the tiling space is the direct limit of
the (ordinary) cohomology $H^k(\vG\nts)$ of the AP complex.

Anderson and Putnam developed their machinery for self-similar
tilings, but the \emph{topology} of the tiling space does not depend
on the \emph{geometry} of the tiles.  A nearly identical procedure
applies to combinatorial substitution systems like the Hat, or more
generally to fusion tilings of any kind; see \cite[Sec.~5]{FS} for
details. The $n$th instance of the AP complex is replaced by a complex
constructed from $n$th order supertiles identified along supertile
edges and the substitution map (between a space and itself) is
replaced by a \emph{forgetful map} from a complex built from $n$th
order supertiles to a complex build from $(n{\ts -}1)$st order
supertiles.

As long as the construction of $n$th order supertiles from
$(n{\ts -}1)$st order supertiles is combinatorially the same for every
$n$, the complexes are all homeomorphic and have the same cohomology,
and the forgetful maps induce the same maps on cohomology.  Just as
with the (naive) Anderson--Putnam procedure, the tiling space is the
inverse limit of these complexes and the \v{C}ech cohomology
$\check H^k(\Omega_\cT)$ of the tiling space is the direct limit of
the cohomology $H^k(\vG\nts)$ of the complexes.

A substitution (or fusion) tiling is said to \emph{force the border}
\cite{kel} if, for some $k$, every $k$th order supertile enforces a
patch of tiles completely covering the supertile plus an extra margin
of positive thickness around the supertile. If such a supertile is
further inflated, this extra margin, where the tiling is also
determined, grows in thickness at the same rate as the supertile. The
result is that a single infinite-order supertile, if it does not
already cover the whole plane, has a unique legal completion to the
whole plane.  It is this latter property we are interested in, as it
simplifies the construction of the tiling space as an inverse limit
space, and hence the computation of its \v{C}ech cohomology.

\begin{figure}
\begin{center}
\includegraphics[width=0.6\textwidth]{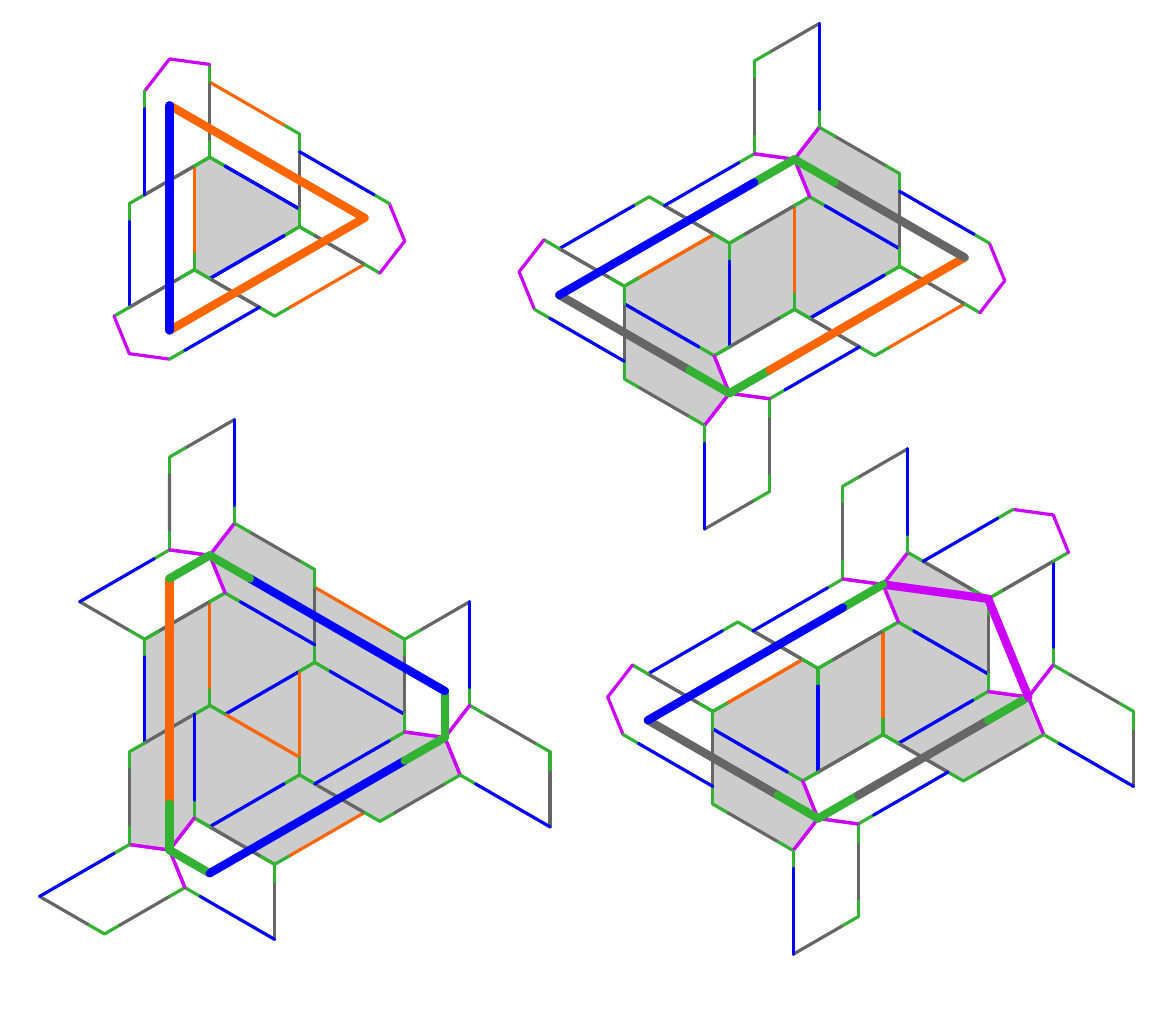}
\end{center}
\caption{\label{fig:infp}Patches of tiles enforced by the level-one
  supertiles. The tiles that belong to a supertile are shaded.  These
  patches extend beyond the outline of the supertiles (thick lines) by
  a margin of positive thickness, thus proving the border-forcing
  property. There is some choice in the assignment of tiles to
  supertiles. Here, we use the original one from \cite{Hat}, while
  other choices are also possible, then leading to different
  fractalizations of tile edges (in comparison to
  Figure~\ref{fig:frac}).}
\end{figure}

\begin{lem}\label{lem:BFnew}
  The substitution on the meta-tiles\/ $\tT$, $\tH$, $\tP$ and\/ $\tF$
  forces the border.
\end{lem}
\begin{proof}
  In Figure~\ref{fig:infp}, we show patches of tiles which are
  enforced by the first order supertiles. The tiles intersecting the
  interior of their supertile were already given in
  \cite[Figure~2.8]{Hat}. We had to complement these patches only by a
  few tiles having a vertex on the supertile boundary. These extra
  vertices are either a purple tip of a tile or a supertile, where a
  local three-fold symmetry is required by the matching conditions. In
  the terminology of \cite{TAO}, we replace the inflation rule by a
  consistent pseudo-inflation with a complete `belt' of positive
  thickness. These pictures show that the enforced patches extend by a
  positive margin beyond the perimeter of the supertile. Under
  inflation, the thickness of this margin, where the tiling is also
  determined, grows at the same rate as the supertile size, which
  implies border-forcing.
\end{proof}

Since the substitution forces the border, we can use the uncollared AP
complex to compute the cohomology \cite{AP}. We work at the level of
(meta-)tiles, but the construction for supertiles of any order is
combinatorially identical.

\begin{figure}[t]
\begin{center}
\includegraphics[width=0.6\textwidth]{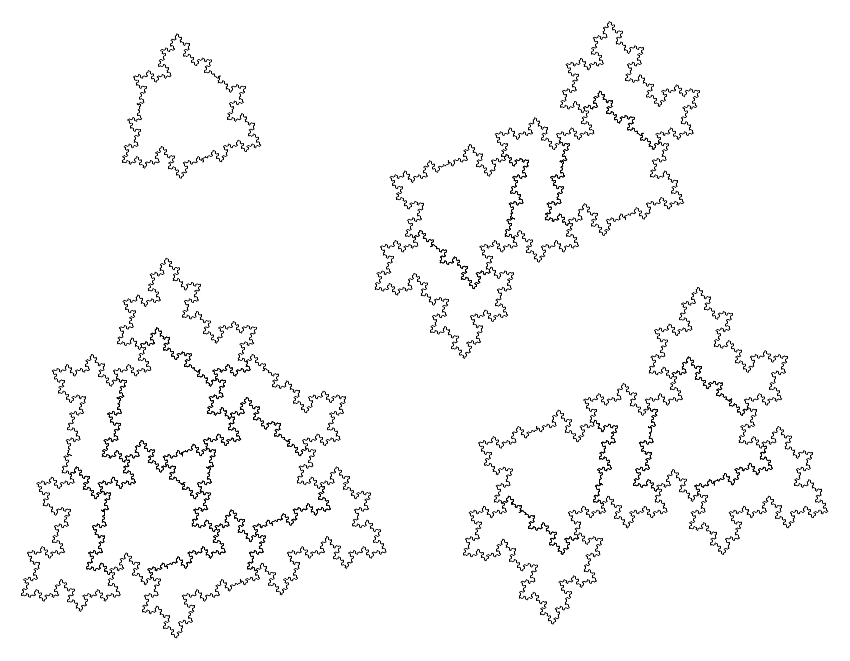}
\end{center}
\caption{\label{fig:frac}First level supertiles with fractalized
  edges. These are obtained by iterating the inflation on a tile and
  rescaling the outline of the resulting patch back to the original
  size. For these \emph{fractiles}, we have a stone inflation.  The
  tiling obtained from this inflation is MLD with the CAP tiling from
  Figure~\ref{fig:patch}. }
\end{figure}

\begin{figure}[hb]
\begin{center}
\includegraphics[width=0.7\textwidth]{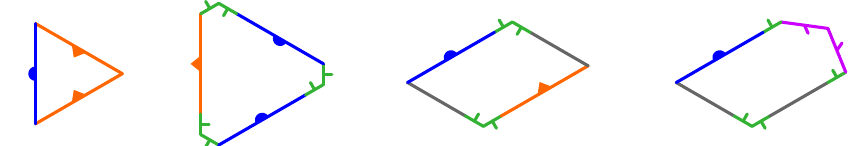}
\end{center}
\caption{\label{fig:tiles2} The four meta-tiles in reference
  orientation. We have added markers to the edges as in \cite{Hat} to
  fix the positive direction of each edge (except for the $L$ edge,
  which is non-directional, see below). In the positive direction, the
  marker is always on the left side of the edge.}
\end{figure}

In the Hat tiling, there are four kinds of meta-tiles, called $\tT$,
$\tH$, $\tP$ and $\tF$, as shown in Figure~\ref{fig:tiles2}.  Each
appears in six orientations, where we choose the reference orientation
to be the one shown in Figure~\ref{fig:tiles2}. That is, our tiles are
$r^k \tT$, $r^k \tH$, $r^k \tP$ and $r^k \tF$, where $k$ ranges from
$0$ to $5$ and $r$ means `positive rotation by $60$ degrees'. The
inflations of these tiles can be read off from Figure~\ref{fig:infp}.

\begin{figure}
\begin{center}
\includegraphics[width=0.6\textwidth]{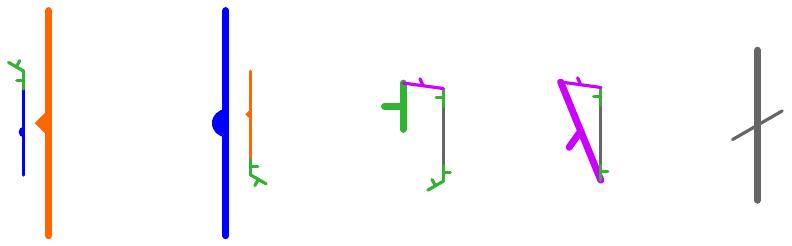}
\end{center}
\caption{\label{fig:edges} The five types of (oriented) edges $A$,
  $B$, $X$, $\fe$ and $L$ in reference orientation, along with their
  inflations. In the positive edge direction, the marker is on the
  left (except for the $L$ edge, which is non-directional). The edge
  inflations can in principle be extracted from Figure~\ref{fig:infp},
  but the determination of edge orientations is a bit subtle.  }
\end{figure}

There are five kinds of edges, labeled $A$, $B$, $X$, $\fe$, and $L$,
all appearing in Figure~\ref{fig:edges}.  The first four edges have a
clear direction. We define the reference orientation to be vertical
(or roughly vertical for $\fe$), with the relevant marking on the
left. The $L$ edge is not directional, so as a chain we have
$r^3 L = -L$.

Now, we turn to vertices. Let $A_+$ and $A_-$ denote the endpoints and
beginning points of an $A$ edge, and likewise with $B$, $X$, $\fe$ and
$L$. The symmetry of the $L$ edge implies that $L_+ = r^3 L_-$. Other
relations are obtained from Figure~\ref{fig:tiles2} by identifying the
end of one edge with the beginning of the next edge on the boundary of
a meta-tile. This yields the following relations:
\begin{align}
   A_+ & \, = \,  r^5 X_-, &  A_- & \, = \, r^3 X_-, \\ 
   B_+ & \, = \,  X_-, & B_- & \, = \, r^2 X_-, \\ 
   X_+ & \, = \,  X_+, &  X_- & \, = \, X_-, \\ 
   \fe_+ & \, = \,  X_+, &  \fe_- & \, = \, \fe_-, \\ 
   L_+ & \, = \,  X_-, &  L_- & \, = \, r^3 X_-, 
\end{align}
expressing every vertex in terms of $X_-$, $\fe_-$ and
$X_+$. Furthermore, 
\begin{equation}
  \fe_- \, = \, r^2 \fe_- \, = \, r^4 \fe_- \ts , \qquad\qquad
  X_+ \, = \, r^2 X_+ \, = \, r^4 X_+ \ts . 
\end{equation}

Having identified the faces, edges and vertices of the AP complex, we
now compute the boundary maps, relative to the bases
$\{\tT, \tH, \tP, \tF \}$ for faces, $\{ A, B, X, \fe, L\}$ for edges
and $\{X_-, X_+, \fe_-\}$ for vertices.

\begin{equation}
  \partial^{}_1 \, = \, \begin{pmatrix} 
  r^5{-}\ts r^3 & 1{-}\ts r^2 & -1 & 0 & 1{-}\ts r^3 \\ 
  0&0&1&1&0 \\  0&0&0&-1&0  \end{pmatrix}
\end{equation}

\begin{equation}
  \partial^{}_2 \, = \, \begin{pmatrix}
   r {+}\ts  r^5 & -1 & r^5 & 0 \\ 
   -1 & r {+}\ts r^{5} & -r^5 & -r^5 \\ 
   0&0& (r{-}\ts r^5)(1{+}\ts r^3) & r^4{-}\ts r^2 \\ 
   0 & 0 & 0 & r\ts {-}\ts r^3 \\ 
   0 & 0 & 0 & r^4 {+} \ts  r^5
\end{pmatrix}.
\end{equation}

Here, we view chains as column vectors, to be acted on from the left
by $\partial^{}_1$ and $\partial^{}_2$. Co-chains are row vectors, to
be acted on from the right.  Strictly speaking, each entry should be
viewed as a $6 \times 6$ block, with `1' being the identity matrix and
\[
  r \, = \, \begin{pmatrix} 0 & 0 & 0 & 0 & 0& 1 \\ 
  1 & 0 & 0 & 0 & 0 & 0 \\ 
  0 & 1 & 0 & 0 & 0 & 0 \\ 
  0 & 0 & 1 & 0 & 0 & 0 \\
  0 & 0 & 0 & 1 & 0 & 0 \\ 
  0 & 0 & 0 & 0 & 1 & 0 \end{pmatrix},
\] 
and we further need to apply the identities $L=-r^3L$, $X_+=r^2 X_+$
and $\fe_-=r^2\fe_-$. Fortunately, it is not necessary to work with
gigantic matrices. Instead, we follow the methods of \cite{ORS}, see
also \cite[Ch.~4]{tilingsbook}, and work with one representation of
$C_6$ at a time. The total cohomology is then the direct sum 
of the contributions of each representation.

Working over the complex numbers, a representation is just a choice of
a $6$th root of unity to assign to $r$. That is, we must consider
$r=\xi^k$ where $k \in \{0, 1, 2, 3, 4, 5\}$.  However, the rows
corresponding to $X_+$ and $\fe_-$ only appear in the representations
where $r^2=1$, and the row and column corresponding to $L$ only appear
in the representations where $r^3=-1$.

\begin{remark}
  Computing the real-valued and integer-valued cohomology is somewhat
  more complicated, both because the irreducible \emph{real}
  representations of $C_6$ are not all $1$-dimensional and because,
  for integer-valued cohomology, decomposing into representations is
  only guaranteed to compute a finite-index subgroup of the
  cohomology. We will compute the integer-valued cohomology of the Hat
  tiling later in this section.  \exend
\end{remark}

The sizes and ranks of the matrices $\partial^{}_1$ and
$\partial^{}_2$ in each representation are shown in Table
\ref{table:ranks}, together with the contributions of each
representation to $H^0(\vG,\CC)$, $H^1(\vG,\CC)$ and
$H^2(\vG,\CC)$.  \smallskip

\begin{table}[t]
  \caption{The contribution of each representation of $C_6$ to the
    complex cohomology $H^k(\vG, \CC)$ of the Anderson--Putnam
    complex \label{table:ranks}}
\renewcommand{\arraystretch}{1.5}  
\begin{tabular}{|c|c|c|c|c|c|c|}\hline
Representation & $r=1$ & $r=\xi$ & $r=\xi^2$ & $r=\xi^3$
  & $r=\xi^4$ & $r=\xi^5$ \\ 
\hline \hline
Number of faces & 4 & 4 & 4 & 4 & 4 & 4 \\ \hline
Number of edges & 4 & 5 & 4 & 5 & 4 & 5 \\ \hline
Number of vertices & 3 & 1 & 1 & 3 & 1 & 1 \\ \hline
Rank of $\partial^{}_2$ & 2 & 2 & 3 & 2 & 3 & 2 \\ \hline
Rank of $\partial^{}_1$ & 2 & 1 & 1 & 3 & 1 & 1 \\ \hline
Contribution to $H^0$ & $\CC$ & 0 & 0 & 0 & 0 & 0 \\ \hline
Contribution to $H^1$ & 0 & $\CC^2$ & 0 & 0 & 0 & $\CC^2$ \\ \hline
Contribution to $H^2$ & $\CC^2$ & $\CC^2$ & $\CC$
     & $\CC^2$ & $\CC$ & $\CC^2$ \\ \hline
\end{tabular}
\end{table} 

Having computed $H^1(\vG,\CC)$, we must take a direct limit
under the pullback of the forgetful map to obtain the \v{C}ech
cohomology $\check H^1(\Omega_\cT,\CC)$ of the hull
$\Omega_{\cT}$. There are manifestly two generators of
$H^1(\vG, \CC)$ that transform with eigenvalue $\phi^2$ under
substitution. This is because the cochains that assign to each edge
its horizontal or vertical displacement get stretched asymptotically
by $\phi^2$ under substitution. Since substitution acts on
$1$-cochains, and therefore on $H^1(\vG,\CC)$, via a matrix with
integer coefficients, the algebraic conjugate of $\phi^2$, namely
$\phi^{-2}$, must also be an eigenvalue with multiplicity two. Since
$H^1(\vG,\CC)$ is only $4$-dimensional, substitution acts
invertibly on $H^1(\vG,\CC)$ and the direct limit of
$H^1(\vG,\CC)=\CC^4$ is just $\CC^4$. We conclude that
$\check{H}^1(\Omega_{\cT}, \CC) = \CC^4 = \RR^8$, with a subspace of
real dimension $4$ that expands under substitution with eigenvalue
$\phi^2$ and a subspace of real dimension $4$ that contracts under
substitution with eigenvalue $\phi^{-2}$.

The set of asymptotically negligible classes, which describe shape
changes (modulo MLD) that are topological conjugacies, is exactly the
contracting subspace under substitution \cite{CS1}.  Thus, the
$8$-dimensional family of shape deformations of the Hat (modulo MLD)
breaks into a $4$-dimensional family of changes to the large-scale
structure and a $4$-dimensional family of shape conjugacies.

Since we can transform any tiling by applying a rigid linear
transformation, thereby changing its large-scale structure, and since
the space of linear transformations is $4$-dimensional, the shape
classes of any two Hat tilings have expansive components that are
related by such a linear transformation. After applying a linear
transformation to one of them, the shape classes differ by something
asymptotically negligible, so the tiling spaces are topologically
conjugate.

So far, the analysis has only applied to tilings by meta-tiles, not by
anti-meta-tiles.  However, tilings by anti-meta tiles are already
related by a linear transformation (namely reflection) to tilings by
standard meta-tiles. Thus, \emph{every} space of tilings by Hats,
whether involving meta-tiles or anti-meta-tiles, is topologically
conjugate, up to linear transformation, to every other such
space.\footnote{Note that a Hat tiling is homeomorphic, but not
  topologically conjugate, to an anti-Hat tiling.}  Likewise, this
applies also to the shape changes that are used to define Socolar's
Key tiles \cite{Soc}.

This concludes the proof of Theorem~\ref{main1}.

\begin{remark}
  Both the expansive classes and the asymptotically negligible classes
  come equally from the $r=\xi$ and $r=\bar{\xi}=\xi^5$
  representations. If we want to preserve rotational symmetry, the
  vector along the $r^k E$ edge (where $E$ is any of
  $\{ A, B, X, \fe, L\}$) should be $\xi^k$ times the vector along the
  $E$ edge. That is, shapes that respect rotational symmetry live
  entirely in the $r=\xi$ representation, while contributions from the
  $r=\xi^5$ representation break rotational symmetry.  \exend
\end{remark}

\medskip

A (relatively) simple computation of $\check{H}^1(\Omega_{\cT}, \CC)$
was sufficient to prove Theorem~\ref{main1}.  However, a more complete
computation of the cohomology of $\Omega_{\cT}$, and how it transforms
under rotation and substitution, is also of interest, yielding insight
into the structure of Hat tilings.

First, we consider real-valued cohomology. For any topological space
$X$, any cohomology theory, and any degree $k$, the real dimension of
$H^k(X,\RR)$ is the same as the complex dimension of
$H^k(X,\CC)$. This implies that
$\check H^1(\Omega_{\cT},\RR) = H^1(\vG,\RR) = \RR^4$ and that
$\check H^2(\Omega_{\cT},\RR) = H^2(\vG,\RR) = \RR^{10}$. The
decomposition into representations is slightly more subtle, since the
complex representations with $r$ being $\xi$, $\xi^2$, $\xi^4$ or
$\xi^5$ are not complexifications of real representations.  Rather,
the direct sum of the $r = \xi^{\pm 1}$ representations is the
complexification of a $2$-dimensional real representation, as is the
direct sum of the $r = \xi^{\pm 2}$ representations.

Rather than simply relying on stretching and algebraic conjugacy, we
compute directly how substitution acts on $H^1(\vG,\RR)$ and
$H^2(\vG,\RR)$.  The action of substitution on faces can be read
off from Figure~\ref{fig:tiles} and the action on edges and vertices
from Figure~\ref{fig:edges}. The matrices for the action on vertices,
edges, and faces are:
\begin{eqnarray}
\sigma^{}_0 & \, = \, & \begin{pmatrix}
   r^5 & 0 & 0 \\ 0 & 0 & r^3 \\ 0 & r^4 & 0 \end{pmatrix}, \\[1mm]
\sigma^{}_1 & \, = \, & \begin{pmatrix} 
   0 & 1 & 0 & 0 & 0 \\ 
   1 & 0 & 0 & 0 & 0 \\ 
   1{-}\ts r^4 & r-r^3 & 1{+}\ts\ts r^5{-}\ts r^3
   & 1{-}\ts r^3 & 0 \\ 
   0 & 0 & -r^4 & -r^4 & 0 \\ 
   0 & 0 & 1 & 1 & r^5 \end{pmatrix}, \\[1mm]
\sigma^{}_2 & \, = \, &  \begin{pmatrix} 
   0 & r^5       & 0         & 0  \\ 
   1 & 2{+}\ts r^4     & 1{+}\ts r^5     & 1{+}\ts r^5 \\  
   0 & 1{+}\ts r{+}\ts r^2   & r         & r \\ 
   0 & 1{+}\ts r^2{+}\ts r^4 & r^2{+}\ts r^5  & 1{+}\ts r^2{+}\ts r^5
  \end{pmatrix}. 
\end{eqnarray}

Acting on $H^2(\vG, \RR)$, we can study this one representation
at a time.  The substitution eigenvalues are exactly those listed in
Table~\ref{table:coho}. Note that the product of these eigenvalues is
1, so substitution acts via a unimodular matrix on
$H^2(\vG, \RR)$.

We next compute $H^*(\Omega_{\cT}, \ZZ)$ more carefully, without
decomposing by representation, using the full $27 \times 24$ matrix
$\partial^{}_2$ and the full $10 \times 27$ matrix
$\partial^{}_1$. The result is that $H^1(\vG, \ZZ)=\ZZ^4$ and
$H^2(\vG, \ZZ)=\ZZ^{10}$.  Since substitution acts via
unimodular matrices, the direct limits of $\ZZ^4$ and $\ZZ^{10}$ are
simply $\ZZ^4$ and $\ZZ^{10}$. As with the real-valued cohomology,
these can be decomposed into direct sums of modules of the form
$\ZZ[r]/(r-1)$ (corresponding to $r=1$), $\ZZ[r]/(r+1)$ (corresponding
to $r=-1$), $\ZZ[r]/(r^2-r+1)$ (corresponding to $r=\xi^{\pm 1}$) and
$\ZZ[r]/(r^2+r+1)$ (corresponding to $r = \xi^{\pm 2}$). This
establishes Theorem \ref{main3}.

\medskip

Finally, we turn to a closer examination of
$\check{H}^1(\Omega_{\cT}, \CC) \simeq H^1(\vG,\CC)$, which has
components in the $r = \xi^{\pm 1}$ representations. In the $r=\xi$
representation, the left-kernel of the matrix $\partial^{}_2$ is
spanned by the three row vectors
\begin{equation}
  \label{eq:threemu}
  \begin{split}
  \bmu^{}_1 & \, = \,  (1+\xi^5, 1+\xi^5, 1, 0 ,0) \ts , \\ 
  \bmu^{}_2 & \, = \,  (\xi + \xi^2, \xi + \xi^2, 0, 1, 0) \ts , \\ 
  \bmu^{}_3 & \, = \,  (1+\xi^5, 1+\xi^5, 0,0,1) \ts .
\end{split}
\end{equation}
The row space of $\partial^{}_1$ is spanned by
$2 \bmu^{}_3 - \bmu^{}_1$, so $\bmu^{}_1$ is cohomologous to
$2 \bmu^{}_3$.

Under right-multiplication by $\sigma^{}_1$, $\bmu^{}_1$ transforms to
$(2+\xi^5) \bmu^{}_1 + 2 \bmu^{}_2$, while $\bmu^{}_2$ transforms to
$\xi (\bmu^{}_1 + \bmu^{}_2)$. That is, the action of substitution on
$H^1(\vG,\CC)$ in the $\{ \bmu^{}_1, \bmu^{}_2 \}$ basis and in
the $r=\xi$ representation is via the matrix
\begin{equation} 
  \widetilde{\sigma}^{}_1 \, = \, \begin{pmatrix}
   2 {+}\ts  \xi^5 & 2 \\ \xi & \xi  \end{pmatrix},
\end{equation}
where as usual we are thinking of cohomology classes as rows with the
matrix acting to the right. The left and right eigenvectors
corresponding to the eigenvalues $\lambda = \phi^{\pm 2}$ take the
form
\begin{equation}\label{eq:eigen}
  \ell^{}_\lambda \, = \, (\lambda - \xi, 2), \qquad 
  r^{}_\lambda \, = \, \begin{pmatrix}
     \lambda - \xi \\ \xi \end{pmatrix}. 
\end{equation}
Results for $r=\xi^5$ are complex conjugates of the corresponding
results for $r=\xi$.

\begin{figure}
\begin{center}
\includegraphics[width=0.6\textwidth]{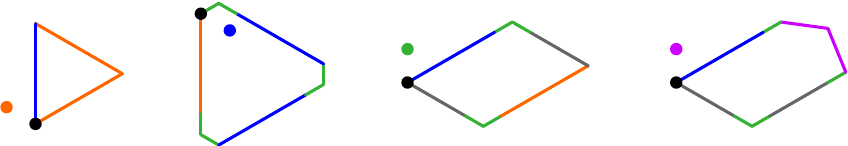}
\end{center}
\caption{\label{fig:tiles}The tiles of the CAP tiling, together with
  their control points (orange, blue, green and purple for $\tT$-,
  $\tH$-, $\tP$- and $\tF$-tiles, respectively). Relative to the
  corners marked with black dots, the control points are at positions
  $\ii\phi\xi$, $-\ii\phi\xi$, $\ii\phi$ and $\ii\phi$, respectively.
  Apart from the $\tH$-tiles, the control points are outside their
  tile, but never coincide with a control point of a neighboring tile.
  Each tile occurs in six different orientations, but not in a
  reflected version.  }
\end{figure}

\section{The CAP tiling}\label{sec:SST}

In this section, we construct the CAP tiling. We work directly with
the shape of the $\tT$, $\tH$, $\tP$ and $\tF$ meta-tiles, or
equivalently the vector displacements along the $A$, $B$, $X$, $\fe$
and $L$ edges. We choose
\[
  (A, B, X, \fe, L) \, = \,
  (3\phi, 3\phi, 1, \phi+\xi, 2\phi) \, = \,
  \bmu^{}_1 + (\phi+\xi)\bmu^{}_2 + 2\phi \bmu^{}_3 \ts .
\]
This is cohomologous to $(\phi+1)\bmu^{}_1 + (\phi+\xi)\bmu^{}_2$,
which we represent as $(\phi+1, \phi+\xi)$. This is a left-eigenvector
of $\widetilde{\sigma}^{}_1$ with eigenvalue $\phi^2$.  The
substitution rule (shown in Figure~\ref{fig:infp}) is self-similar in
the sense that a substituted tiling (with shape class
$(\phi+1, \phi+\xi) \widetilde{\sigma}_1$) is then MLD to a rescaling
of the original tiling by $\phi^2$ (with shape class
$\phi^2(\phi+1, \phi+\xi)$). This constitutes the LIDS property in the
sense of \cite[Def.~5.16]{TAO}.

\begin{figure}
\begin{center}
\includegraphics[width=0.9\textwidth]{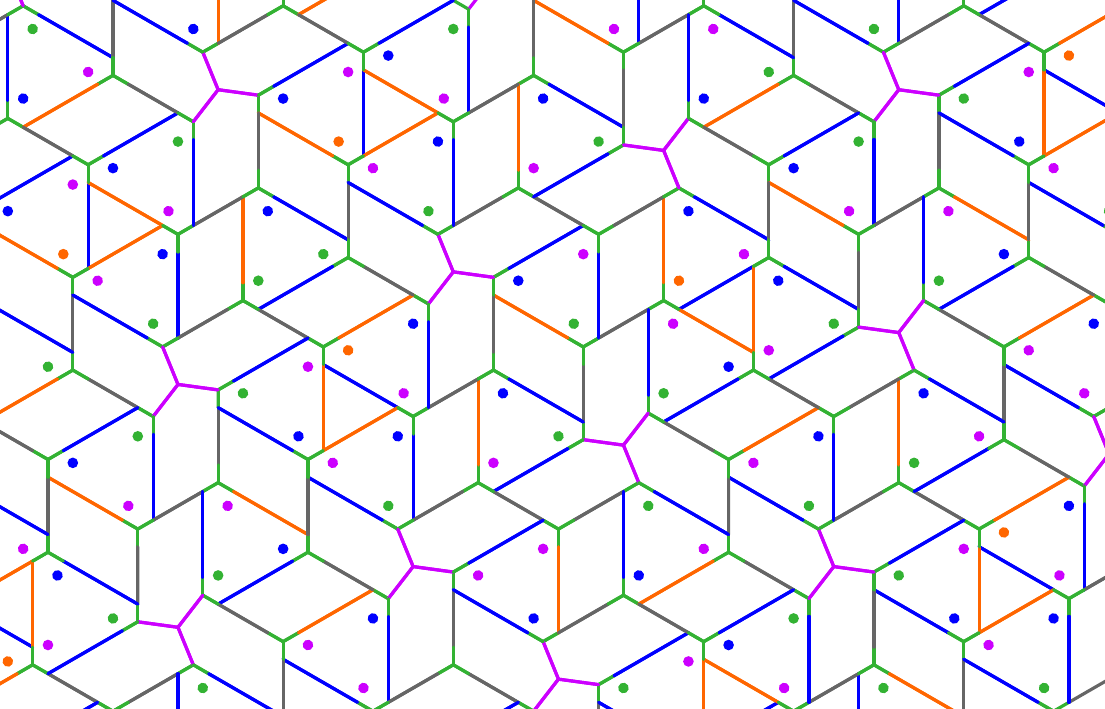}
\end{center}
\caption{\label{fig:patch}A patch of the CAP tiling. All control
  points occur in triples inside the $\tH$-tiles. One point (blue) of
  such a triple comes from the $\tH$-tile itself, the other two from
  the two tiles adjacent along one of the two blue edges. The
  distances between any control points are elements of the return
  module $R$.  }
\end{figure}

The tiles with these shape parameters are shown in
Figure~\ref{fig:tiles}, where we have also added a control point for
each tile. A patch of the resulting tiling, also including control
points, is shown in Figure~\ref{fig:patch}. The control points have
been chosen such that they lie in a single orbit of the \emph{return
  module} $R$ of the tiling.  The return module is the $\ZZ$-span of
all \emph{return vectors}, which in turn are vectors which translate a
finite patch of a tiling to an identical copy of that patch in the
same tiling.

Control points of tiles of the same type and orientation are clearly
in a single $R$-orbit, but if this is to hold for all control points,
a number of constraints must be satisfied. In Figure~\ref{fig:patch},
one can check that
\begin{equation}\label{eq:t-vec}
   t^{}_{0} \, = \, 3\phi+2-\xi=\phi^2(1+\xi)(\phi-\xi)
\end{equation}
is a return vector, and that all other return vectors are contained in
the module generated by $\phi \ts t^{}_{0}$, $t^{}_{0}$,
$\phi\ts \xi t^{}_{0}$, and $\xi t^{}_{0}$. This module is
$t^{}_{0} \ZZ[\xi,\phi]$, which is invariant under inflation, and is
indeed our return module $R$ (see also Section~\ref{sec:compare}). In
Figure~\ref{fig:patch}, one can also convince oneself that all control
points are separated by vectors in $R$.

The return module $R$ is needed to determine the point spectrum of the
CAP tiling, but also to represent its set of control points as a
cut-and-project set. The latter is possible only if the dynamical
spectrum is pure point, which we show next. A classical method to
prove this for self-similar inflation tilings is Solomyak's overlap
algorithm \cite{Sol97}, which works as follows. Suppose $\cT$ is an
instance of such an inflation tiling. Solomyak showed that its
spectrum is pure point if and only if, for every return vector $t$,
\[
  \coinc \bigl( \sigma^n(\cT),\sigma^n(\cT-t) \bigr)
  \, \xrightarrow{\, n\rightarrow\infty \,} \, 1 \ts ,
\]
where $\sigma$ is the inflation map, and $\coinc (\cT_1,\cT_2)$ is the
area fraction covered by coincident tiles of the two tilings $\cT_1$
and $\cT_2$. It is actually enough to show this for $t$ in a basis of
the return module. The area not covered by coincident tiles is called
the \emph{discrepancy} area.

The space can now be cut into compact pieces, called \emph{overlaps},
which are just the (non-empty) overlaps of all pairs of tiles, one
from each tiling. Overlaps formed by two coincident tiles (coincidence
overlaps) cover the coincidence area, whereas the remaining
discrepancy overlaps cover the discrepancy area. There is an inflation
defined on overlaps: one inflates the two tiles, and decomposes the
overlap of the two supertiles into overlaps. Due to finite local
complexity, for any fixed return vector $t$, the number of different
overlap types (up to translation) is finite, and a PV inflation factor
guarantees that only a finite number of overlap types is generated
under inflation.

For a stone inflation, the corresponding overlap inflation can now be
used to test Solomyak's criterion. The area fraction covered by
coincident tiles converges to $1$ if and only if every overlap type
eventually produces a coincidence overlap. In this case, the area
fraction covered by the discrepancy overlaps shrinks at each inflation
step by a certain factor bounded away from $1$. Conversely, if at
least one overlap type does not produce any coincidence, there is a
discrepancy region whose area fraction does not shrink.

Here, we have a stone inflation only for the fractiles, for which it
is difficult to determine whether two tiles have a non-empty overlap.
For this situation, there is a refined algorithm by Akiyama and Lee
\cite{AL}. It works with potential overlaps in a first stage, and
eliminates those which are not true overlaps at a later stage. This
refined algorithm shows that the CAP tiling has pure point
spectrum. Unfortunately, the details are complicated and had to be
done by computer. For readers who find computer-assisted proofs
unsatisfying, we give an alternative proof of pure-point spectrum
below.

\begin{lem}\label{lem:pp}
  The topological dynamical system\/ $(\Omega_{\cT}, \RR^2)$ induced
  by the CAP tiling is strictly ergodic. Further, it has pure-point
  dynamical spectrum, where all eigenfunctions have continuous
  representatives.
\end{lem}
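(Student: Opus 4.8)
The plan is to establish two separate things: strict ergodicity (the topological/dynamical part) and pure-point spectrum (the spectral part). For the first, I would invoke the standard theory of primitive substitution tilings. Since the CAP tiling arises from a primitive combinatorial substitution with a single LI class (the meta-tiles $\tT$, $\tH$, $\tP$, $\tF$ each appear in six orientations, and primitivity is evident from the fact that every tile type appears in the inflation of every other after finitely many steps), the hull $\Omega_{\cT}$ is minimal and uniquely ergodic by the classical arguments for primitive inflations of finite local complexity; see \cite{TAO,tilingsbook}. Strict ergodicity is precisely minimality plus unique ergodicity, so this part is essentially a citation once primitivity and FLC are noted.

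\medskip

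For pure-point spectrum, I would avoid the computer-assisted overlap computation and instead use the algebraic structure already in hand. The key input is that the inflation factor is $\phi^2$, a PV number (since $\phi$ is PV and powers of PV numbers are PV), and that the return module $R = t^{}_{0}\ts\ZZ[\xi,\phi]$ is a finitely generated $\ZZ$-module of rank $4$ that is invariant under multiplication by $\phi^2$ and by $\xi$. The natural strategy is to realize the control point set as a subset of $\ZZ[\xi,\phi]$ (up to the scaling $t^{}_{0}$) and then apply the Galois/model-set characterization of pure-pointedness for Pisot inflations. Concretely, since $R$ is a rank-$4$ $\ZZ$-module sitting inside $\CC = \RR^2$, the complementary embedding into the \emph{internal space} is governed by the Galois conjugates of $\phi$. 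The field $\QQ(\sqrt{-3},\sqrt{5}\,)$ is the relevant quartic field; the conjugate $\phi' = (1-\sqrt{5}\,)/2$ satisfies $|\phi'| < 1$, and this contraction in the internal space is exactly what drives the Meyer/model-set property. I would show that, under the star map sending $\phi \mapsto \phi'$ (and fixing $\xi$, or sending $\xi$ to a sixth root of unity as dictated by the field structure), the control points map into a bounded window, which is the hexagon of Theorem~\ref{main2}.

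\medskip

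The cleanest route to pure-pointedness is then the following equivalence, which lets me bypass Solomyak's overlap algorithm entirely: for a primitive FLC inflation tiling whose expansion is a PV unit, pure-point \emph{dynamical} spectrum is equivalent to the control point set being a \emph{regular model set}, which in turn is equivalent to the star-image of the point set being relatively dense and bounded with a window whose boundary has measure zero. I would verify these conditions directly from the explicit description of $R$ and the control points: boundedness of the star-image follows from $|\phi'|<1$ together with the fact that the control points are obtained by applying words in the inflation to finitely many seed tiles, so their star-images form a geometric series that converges inside a bounded region; relative denseness follows from minimality and the lattice structure of $R$. The measure-zero boundary follows because the window is a finite union of polygons (the subdivided hexagon). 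This realizes the control points as a regular cut-and-project set with internal space $\RR^2$, which is precisely Theorem~\ref{main2}, and regular model sets are well known to have pure-point diffraction and dynamical spectrum with \emph{continuous} eigenfunctions; see \cite{TAO}.

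\medskip

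The main obstacle I anticipate is verifying that the star-images of \emph{all} control points (not just those of a single tile type or a single $R$-orbit) actually land inside a common bounded window, and that this window is exactly the claimed hexagon rather than some larger or fractal-boundary region. The control points for the four tile types sit at the positions $\ii\phi\xi$, $-\ii\phi\xi$, $\ii\phi$, $\ii\phi$ relative to marked corners (from Figure~\ref{fig:tiles}), so the four subwindows must be computed from the star-images of the corner displacements accumulated along infinite inflation words. Controlling this requires showing the relevant contracting iteration converges and tiling the resulting attractor — essentially identifying the window as the solution of an iterated function system with contraction $|\phi'|$. I expect this to be the computational heart of the argument, and it is where I would spend most of the effort; the pure-pointedness then follows formally once the regular-model-set structure is in place.
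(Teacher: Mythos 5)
Your overall strategy (strict ergodicity from primitivity and FLC, then pure-pointedness via the cut-and-project/model-set route rather than the overlap algorithm) matches the paper's alternative proof, and the ergodicity half is fine. But the spectral half has a genuine gap. Your claimed chain of equivalences --- pure-point dynamical spectrum iff regular model set iff ``the star-image of the point set is relatively dense and bounded with a window whose boundary has measure zero'' --- breaks at the second step. Boundedness and relative denseness of the star-image only make the control point set a \emph{Meyer set}, equivalently a \emph{subset} of the model set $\oplam(W)$ cut by the closure $W$ of the star-image; proper subsets of regular model sets need not have pure point spectrum. To conclude pure-pointedness you must show that the control points actually \emph{exhaust} $\oplam(W)$ up to a set of density zero. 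The paper does exactly this by an explicit density comparison: the density of the covering model set, $\rho^{}_1 = \dens(\cL)\,\vol(W) = \frac{2}{5}\sqrt{3}\,\phi^2$, is computed from the covolume of the Minkowski embedding lattice and the area of the hexagonal window, while the density $\rho^{}_2$ of the control points is computed independently from the Perron--Frobenius frequency vector and the polygonal tile areas; the equality $\rho^{}_1=\rho^{}_2$ is the step that closes the argument (via the equivalence theorem of \cite{LMS,BL}), and it is absent from your proposal.

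A second, smaller error: you assert that the measure-zero boundary ``follows because the window is a finite union of polygons (the subdivided hexagon).'' Only the \emph{total} window is a polygon. The subwindows $W_i$ for the individual tile types are the attractor of the window IFS $W_i = \bigcup_j \phi^{-2} W_j + D_{ij}^{\star}$ and have partly \emph{fractal} boundaries of Koch type, with Hausdorff dimension $\log\bigl(2+\sqrt{3}\,\bigr)/\bigl(2\log\phi\bigr)\approx 1.368$. These boundaries still have Lebesgue measure zero (the dimension is below $2$), so regularity survives, but establishing this requires the IFS analysis you correctly anticipate at the end of your proposal, not the polygonality you assume in the middle. You also leave the star map underdetermined; it must send $\xi\mapsto\xi^5$ together with $\phi\mapsto 1-\phi$, since fixing $\xi$ would not yield a genuine lattice embedding of $\ZZ[\xi,\phi]$ in $\RR^4$.
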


\begin{figure}
\centerline{\includegraphics[width=0.9\textwidth]{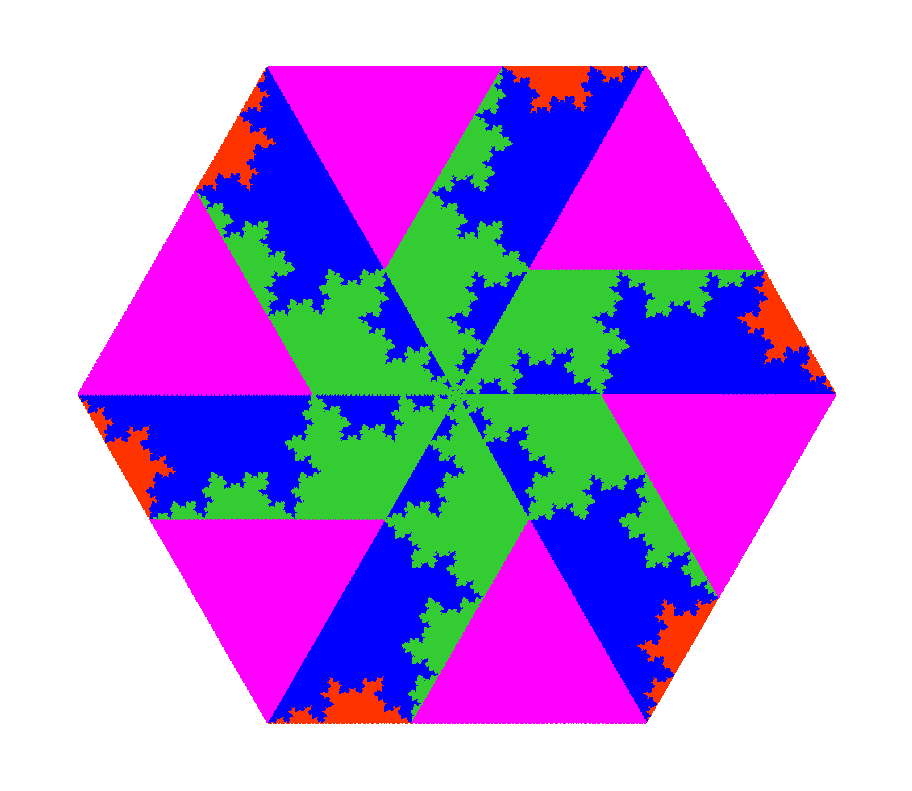}}
\caption{\label{fig:window}Window for the cut-and-project scheme of
  the CAP tiling. The color code refers to the type of the tile whose
  control point is plotted (orange for $\tT$-, blue for $\tH$-, green
  for $\tP$- and purple for $\tF$-tiles). The subdivision for the
  different tile types is partly fractal, see Eq.~\eqref{eq:dim}, and
  partly straight. The edge length of the window is $\phi$ times the
  unit length in $R$.  The image is \emph{chiral} with sixfold
  rotational symmetry, but has no reflection symmetry, although the
  total window, and hence the set of all control points (without
  color), is mirror symmetric.}
\end{figure}

Now, we show that the set $\vL$ of control points of the CAP tiling,
which is a subset of the return module $R$, forms a cut-and-project
set (CPS).  For this, we need the construction from \cite{BM} which
gives the CPS from intrinsic data of the tiling; see also
\cite[Sec.~5.2]{Nicu} for a detailed description. The procedure
simplifies for inflation point sets, where one can more easily
determine the limit translation module from the return vectors. The
upshot is that the limit translation module agrees with our return
module, and needs to be lifted to an embedding lattice.  Due to the
underlying structure with the PV unit $\phi^2$, one natural way to
turn $R$ into a lattice in $\RR^4$ employs the classic Minkowski
embedding; see \cite[Sec.~3.4]{TAO}.

To this end, we first rescale $\vL$ by a factor $t^{-1}_{0}$, see
Eq.~\eqref{eq:t-vec}, so that its return module is equal to
$\ZZ[\xi,\phi]$. This subset is then easily lifted to the Minkowski
embedding of $\ZZ[\xi,\phi]$, and then projected to internal
space. The Minkowski embedding $\cL$ of $\ZZ[\xi,\phi]$ is obtained by
combining the basis vectors $\{\phi,1,\xi\phi,\xi\}\subset\CC$ with
their Galois conjugates, so that we obtain the basis vectors of $\cL$
as
$\{(\phi,1-\phi),(1,1),(\phi\ts
\xi,(1-\phi)\xi^5),(\xi,\xi^5)\}\subset\CC^2$.  The resulting window
is shown in Figure~\ref{fig:window}. This is justified by the general
uniform distribution result for this setting \cite{Moody}.

\begin{remark}
  The choice of control points is not unique, and any other
  representative from the same MLD class can be used as well.  Matters
  are simplified by using only \emph{one} translation class with
  respect to $\ZZ[\xi,\phi]$, as one has to use several windows for
  the different translation classes otherwise (as also occurs in
  \cite{Soc}). Even so, other choices are possible with simple total
  window, for instance one where the center of the outer hexagon is a
  smaller hexagon for all control points of $\tF$-tiles, surrounded by
  a belt of six trapezoids that are fractally subdivided into the
  other three types, which then breaks the reflection symmetry. This
  choice would simply amount to a re-coloring of the control points we
  use here. \exend
\end{remark}

Put differently, we are using the following \emph{cut-and-project
    scheme} \cite[Sec.~7.2]{TAO}
\begin{equation}\label{eq:CPS}
\renewcommand{\arraystretch}{1.2}\begin{array}{r@{}ccccc@{}l}
   \\  & \RR^2 & \xleftarrow{\;\;\; \pi \;\;\; }
   & \RR^2 {\times}\ts \RR^2 &
   \xrightarrow{\;\: \pi^{}_{\text{int}} \;\: } & \RR^2 & \\
   & \cup & & \cup & & \cup & \hspace*{-1.5ex}
   \raisebox{1pt}{\text{\footnotesize dense}} \\
   & \pi (\cL) & \xleftarrow{\;\ts 1-1 \;\ts } & \cL &
     \xrightarrow{ \qquad } &\pi^{}_{\text{int}} (\cL) & \\
   & \| & & & & \| & \\
   & \ZZ[\xi,\phi] & \multicolumn{3}{c}{\xrightarrow{\qquad\quad\quad
    \,\,\,\star\!\! \qquad\quad\qquad}}
   &  \ZZ[\xi,\phi]  & \\ \\
\end{array}\renewcommand{\arraystretch}{1}
\end{equation}
with the natural projections $\pi$ and $\pi^{}_{\mathrm{int}}$ to
direct (or physical) and internal (or perpendicular) space. The
$\star$-map is the Galois automorphism induced by the joint action of
$\phi \mapsto \phi^{\ts \prime} = 1-\phi$ and
$\xi \mapsto \bar{\xi} = \xi^5$. This is the mapping used for the
Minkowski embedding. It is not the only choice, but any other one is
equivalent to it via standard modifications.

The point density of our CPS with the above lattice and window is
given by the density of the lattice times the area of the total
window. The lattice $\cL$ has a unit cell of volume $15/4$, so we get
$\dens (\cL) = 4/15$.  Indeed, each summand $\ZZ[\phi]$ provides a
factor $\sqrt{5}$ (as known from the Fibonacci tiling), and because
the sum is not orthogonal, we get two extra factors $\sqrt{3}/2$, as
known from the triangular lattice. The window is a regular hexagon of
edge length $\phi$, hence has area $\frac{3}{2} \phi^2 \sqrt{3}$, so
that the density of the covering model set with this window becomes
\[
      \rho^{}_1 \, = \, \myfrac{2}{5} \sqrt{3}\,\phi^2.
\]

Let us now describe in more detail the cut-and-project setting in
terms of the inflation. For this purpose, denote by
$\ft_1,\ldots, \ft_{24}$ the $24$ translational prototiles, which
emerge from the $6$ orientations of the tiles $\tT$, $\tH$, $\tP$ and
$\tF$. Let $D = (D_{ij})$ be the displacement matrix, where $D_{ij}$
is the set of relative positions of tiles of type $j$ in supertiles of
type $i$, as determined via the control points. With $\vL_i$ denoting
the control points of type $i$ in a fixed point tiling (for instance
the one generated by a hexagon with its control point at the origin),
one then has
\[
  \vL^{}_i \, = \: \bigcup_{j}^{.} \phi^2 \vL^{}_j
  \ts + D^{}_{ij} \ts ,
\]
where $A+B$ denotes the Minkowski sum of two sets, and the unions on
the right-hand side are disjoint.  

Under the $\star$-map, and upon taking closures, one obtains the
window iterated function system (IFS)
\[
    W^{}_i \, = \: \bigcup_j \phi^{-2} W^{}_j \ts + D_{ij}^{\star}
\]
for $W_i = \overline{\vL_i^{\star}}$. Note that, due to having taken
the closure, the unions on the right-hand side need no longer be
disjoint. Since $\phi^2$ is a PV number, this IFS is
\emph{contractive} and defines a unique attractor
$(W_1,\ldots,W_{24})\subset (\mathcal{K}\RR^2)^{24}$, where
$\mathcal{K}\RR^2$ is the space of non-empty compact subsets of
$\RR^2$, equipped with the Hausdorff metric. Each
\[
  \oplam (W_i) \, \defeq \, \{ x \in \ZZ[\xi,\phi]
  : x^{\star}\in W_i\}
\]
is a regular model set with $\vL_i \subseteq \oplam (W_i)$ by
construction, where each window $W_i$ is a topologically regular
compact set with almost no boundary; see \cite[Sec.~3]{BG} and
references therein for details.  Some of the boundaries are straight
lines, while some others are fractals of Koch curve type, then with
Hausdorff dimension
\begin{equation}\label{eq:dim}
    d^{}_{\mathrm{H}} \, = \, 
    \frac{\log \bigl( 2 + \sqrt{3}\, \bigr)}{2 \ts \log (\phi)}
    \, \approx \, 1.368 {\ts\ts} 376 {\ts\ts} 49  \ts ,
\end{equation}
as determined by standard methods, see Figure~\ref{fig:Koch}, from 
the induced edge IFS. 

\begin{figure}
\centerline{\includegraphics[width=0.9\textwidth]{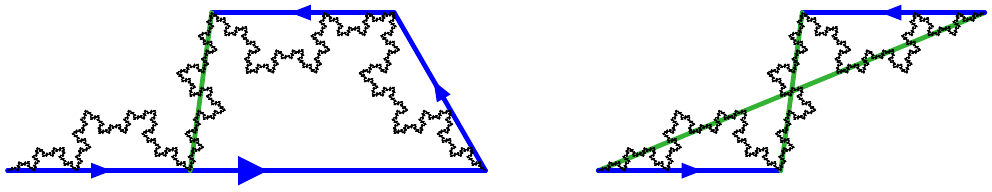}}
\caption{\label{fig:Koch}Koch curve type construction of the fractal
part of the window boundaries, for the $\tH$-tile (left) and the
$\tP$-tile (right). Here, every occurring corner is a projected 
lattice point, so the fractal curves contain lots of them.}
\end{figure}

We now provide the alternate proof that the CAP tiling has pure-point
dynamical spectrum. We will show that the $\vL_i$ have the same
density as the model sets $\oplam (W_i)$, thereby proving pure-point
diffraction spectrum and hence pure-point dynamical spectrum via the
equivalence theorem from \cite{LMS,BL}. Since the $\vL_i$ are
disjoint, it suffices to show that
\[
  \dens (\vL_1\cup\ldots\cup\vL_{24})
    \, = \, \dens \bigl( \oplam (W) \bigr)
\]
for $W=W_1\cup\ldots\cup W_{24}$, the window of
Figure~\ref{fig:window}, where we know that the $W_i$ are mutually
interior-disjoint.

The value of the density of the set of tiles or control points of the
inflation tiling can also be computed as follows.  The relative
frequencies of the four types of tiles are obtained from the right
Perron--Frobenius (PF) eigenvector of the meta-tile inflation matrix.
Normalized such that these frequencies add up to $1$, we get for the
tiles $(\tT,\tH,\tP,\tF)$ the frequency vector
\[
     \bs{f} \, =  \, \bigl(\tfrac{5}{3}-\phi,
    \tfrac{1}{3} , 2\,\phi - 3 , 2-\phi \bigr),
\]
which is independent of the choice of inflation. We remark that one
third of all tiles are hexagons.  For a stone inflation, the relative
areas of the four tile types could be extracted from the entries of
the left PF eigenvector. Here, however, these would be the relative
areas of the corresponding fractiles, whose absolute areas are not
immediately known. We therefore take the areas of the polygonal tiles
instead. With $a=\sqrt{3}/4$, the area of an equilateral triangle of
edge length $1$, we obtain, after some algebra, the vector of tile
areas
\[
     \bs{v} \, = \, a \bigl( 9+9\,\phi, 15+27\,\phi, 14+22\,
     \phi, 15+23\,\phi \bigr) .
\]
For this computation, we have used the lengths of the tile edges 
according to
\[
    (A,B,X,\fe,L) \, = \, (3\ts\phi,3\ts\phi,1,\phi+\xi, 2\ts\phi) \ts .
\]
The average area per tile then becomes $\bs{v}\cdot\bs{f}$, and its
inverse is the tile density.  However, we first want to rescale the
tiling such that the generator $3\phi+2-\xi$ of the return module $R$
has unit length, so that we have to multiply the density by
$|3\phi+2-\xi|^2=6(2+3\phi)$. At this new scale, we obtain a tile
density
\[
  \rho^{}_2 \, = \, \frac{|3\phi+2-\xi|^2}{\bs{v}\cdot\nts\bs{f}}
         \, = \, \myfrac25\sqrt{3}\,\phi^2.
\]

Since $\rho^{}_1 = \rho^{}_2$, the set of control points of the
inflation tiling is indeed the model set $\oplam (W)$, possibly up to
a difference set of zero density. The autocorrelation measures of both
sets agree, because the addition or subtraction of sets of zero
density does not matter \cite[Rem.~9.14]{TAO}, and the same applies to
the diffraction of Dirac combs on $\oplam (W)$ that are weighted
according to the point classes, in comparison to the correspondingly
weighted Dirac combs on the control point sets.  The diffraction
measures of both situations are thus the same, and pure point.  Since
the hulls of both sets give rise to ergodic dynamical systems, the
equivalence theorem \cite{LMS,BL} implies that the tiling dynamical
system has pure-point dynamical spectrum, and both systems have
eigenfunctions with continuous representatives. In other words,
topological and measure-theoretic point spectra are the same.

\begin{remark}
  The equality of $\rho^{}_{1}$ and $\rho^{}_{2}$ is also a
  consistency check of the correct choice of lattice for the CPS.  The
  result now gives the group $\RR^4/\cL\simeq \mathbb{T}^4$ as the
  \emph{maximal equicontinuous factor} (MEF) of our tiling space.  On
  the level of the control points, due to the nature of the windows,
  the sets $\vL_i$ and $\oplam (W_i)$ can only differ in points whose
  $\star$-image lies on a window boundary. If this happens (as it does
  here), the model set description gives the \emph{union} of all
  elements in the fiber over the corresponding point of the MEF. They
  are the \emph{singular} points, which have a slightly more
  complicated nature than usual, due to the partly fractal nature of
  the window boundaries.  \exend
\end{remark}

The dynamical spectrum now is the projection of $\cL^*$, the dual
lattice of the Minkowski embedding $\cL$ of $\ZZ[\xi,\phi]$, into
direct space, which is the dual module. Here, one finds
\begin{equation}\label{eq:spec}
    \ZZ[\xi,\phi]^* \, = \: \bigl( \ZZ[\phi] \cdot \ZZ[\xi]\bigr)^{*} 
    \, = \: \ZZ[\phi]^* \nts \nts \cdot \ZZ[\xi]^* \, = \: 
    \frac{\ZZ[\phi]}{\sqrt{5}} \cdot \frac{2 \ts \ii}{\sqrt{3}\ts}
    \ts \ZZ[\xi] \, = \, \frac{2 \ts \ii}{\sqrt{15}\ts }
    \ts \ZZ[\xi,\phi] 
\end{equation}
by a standard calculation, which gives a scaled and rotated copy of
$\ZZ[\xi,\phi]$ as claimed.

This concludes the proof of Theorem~\ref{main2} and also that
of Lemma~\ref{lem:pp}.

\begin{remark}
  The total window for \emph{all} control points is a regular hexagon,
  whereas the window for the $\tF$-tile control points consists of $6$
  regular triangles. By the general criteria for MLD relations of
  regular model sets in the same CPS, see \cite[Rem.~7.6]{TAO}, we
  thus see that the $\tF$-type points can locally be derived from
  knowing all points (without color), and vice versa.  All other
  subwindows partly have fractal boundaries. This means that control
  points of other tile types cannot be derived by local means from the
  (uncolored) positions of all control points.  \exend
\end{remark}

Let us say a bit more on the diffraction measure, formulated for the
control point set of the CAP tiling. Each of the $24$ Delone sets
$\vL_i$ gives rise to a Dirac comb
$\delta^{}_{\!\vL_i} \defeq \sum_{x\in\vL_i} \delta_x$, and thus to
the corresponding Fourier--Bohr coefficient
\[
  a^{}_{i} (k) \, \defeq  \lim_{s\to\infty}
  \myfrac{1}{\vol (B_s (0))}
  \sum_{x\in\vL_i \cap B_s (0)} \ee^{-2\pi\ii k x},
\]
which exists, for any $k\in\RR^2$, due to ergodicity.  Here,
$B_{s} (0)$ denotes the closed disk of radius $s$ around $0$. The
Fourier--Bohr coefficient is an area-averaged exponential sum, with
$a_{i} (0) = \dens (\vL_i) = \dens (\cL)\ts \vol (W_i)$ from our above
analysis. By the general spectral result for regular model sets,
compare \cite[Thm.~9.4]{TAO}, we get
\begin{equation}\label{eq:FB}
  a^{}_{i} (k) \, = \, \frac{\dens (\vL_i)}{\vol (W_i)}
  \, \widehat{1^{}_{W_i}} (-k^{\star}) 
\end{equation}
for any $k$ in the dynamical spectrum, and $a^{}_{i} (k) =0$
otherwise. Here, $1_A$ denotes the characteristic function of the set
$A$, and $\widehat{1_A}$ its Fourier transform.

If we now consider the weighted Dirac comb
$\omega = \sum_{i} h^{}_i \ts \delta^{}_{\!\vL_i}$, which is
pure-point diffractive, one obtains the diffraction measure
\[
  \widehat{\gamma} \: = \sum_{k \in L^{\circledast}}
  \big\lvert h^{}_{1} a^{}_{1} (k) + \ldots +
  h^{}_{24} a^{}_{24} (k) \big\rvert^2 \delta^{}_{k} \ts ,
\]
which is a pure-point measure supported on the Fourier module
$L^{\circledast} = \pi (\cL^{*})$, with $\pi$ as in
Eq.~\eqref{eq:CPS}.  The module $L^{\circledast}$ equals the dynamical
spectrum from above. The validity of this formula follows from the
fact that a primitive inflation tiling leads to Delone sets that have
the phase consistency property \cite{TAO,BGM,BG}. With little effort,
this can be generalized to other choices of control points, and to
convolutions with various profile functions.

\begin{remark}
  While the formula in \eqref{eq:FB} is a major result in the theory
  of regular model sets, its calculation can become complicated, in
  particular for windows with fractal boundaries. Here, the windows
  $W_i$ are still such that a numerical approach (as in \cite{Soc})
  gives reasonable approximations. An exact calculation is also
  possible, via the compactly converging Fourier cocycle developed in
  \cite[Sec.~4]{BG}, which works here in exactly the same way. A
  simple example, also in comparison to a numerical approximation, is
  worked out in \cite{BG-frac}.  \exend
\end{remark}

This is the standard approach for regular model sets, which directly
applies to the CAP tiling and its control points. To transfer any of
this to the other members of the Hat family of tilings, we need to
connect them to the same cut-and-project scheme, as we do next.

\subsection{Reprojections}

Imagine a Delone set produced like the control points of the CAP
tiling, with exactly the same total space and exactly the same window,
only with a different projection from the total space to physical
space. For this, we need some notation.  If $z\in \cL$ is an element
of our embedding lattice, we write it as $z = (x, x^{\star})$, where
$x=\pi(z)$ and $x^{\star}=\pi^{}_{\text{int}}(z)$ are the orthogonal
projections of $z$ into direct (or physical) and internal space,
respectively.

If $L$ is an arbitrary linear map from $\RR^2$ to $\RR^2$, then 
\begin{equation}
  \pi'(z) \, = \, \bigl( \pi + L\circ \pi^{}_{\text{int}} \bigr)(z)
  \, = \, x + L(x^{\star}) 
\end{equation}
is another projection from $\RR^4$ to $\RR^2$. In fact, \emph{every}
projection from the total space $\RR^4$ to direct space $\RR^2$ can be
written in this way. Changing projections is exactly the same as
adding a linear map from internal space to direct space, which is a
standard situation in the treatment of deformed model sets; see
\cite[Ex.~9.9]{TAO} and references given there.

The coordinates of $x^{\star}$ are weakly pattern-equivariant
functions on our Delone set.  If two points have identical
neighborhoods, their star coordinates are close but not identical.
However, the difference $x_1^{\star}-x_2^{\star}$, where $x^{}_1$ and
$x^{}_2$ are nearby points in our Delone set, is \emph{strongly}
pattern-equivariant. This means that the coboundaries of the
coordinates of $x^{\star}$ (viewed as $1$-cochains on the tiling
obtained by connecting the vertices of our Delone set) are linearly
independent, asymptotically negligible classes in
$\check{H}^1(\Omega_{\cT}, \RR )$. The span of the linear functions
from internal space to direct space is then a $4$-dimensional space of
asymptotically negligible classes in
$\check{H}^1(\Omega_{\cT}, \RR^2)$. However,
$\check{H}^1(\Omega_{\cT}, \RR^2)$ is only $4$-dimensional, so
\emph{all} asymptotically negligible classes are obtained in this way.

By reprojecting the CAP Delone set in arbitrary ways, we obtain Delone
sets whose shape classes differ from the CAP shape class by arbitrary
asymptotically negligible elements of
$\check{H}^1(\Omega_{\cT}, \RR^2)$. This means that every tiling that
is topologically conjugate to the CAP Delone set is MLD to such a
reprojection of the CAP Delone set, as claimed in Theorem
\ref{main2a}.  \smallskip

\begin{figure}
\begin{center}
\includegraphics[width=0.9\textwidth]{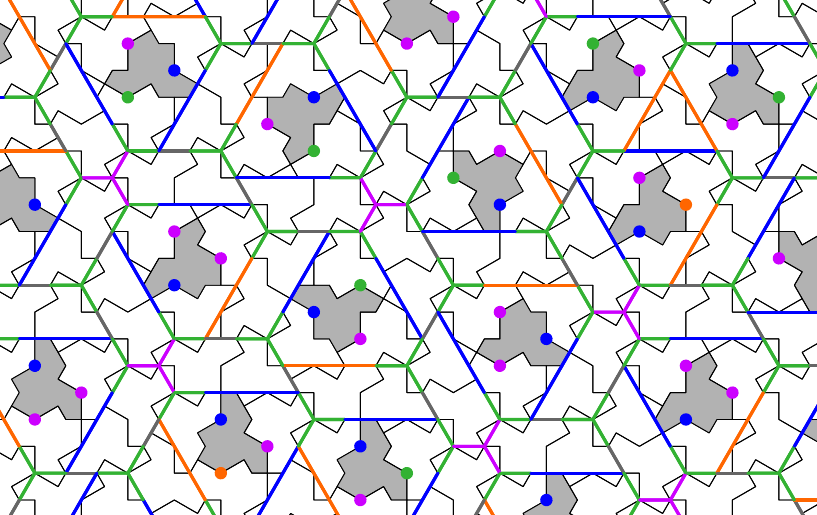}
\end{center}
\caption{\label{fig:Hats-meta-dots} Hat tiling with superimposed
  meta-tiles and control points.  The latter are obtained by
  reprojection of the control points of the CAP tiling.  }
\end{figure}

\begin{remark}\label{rem:not}
  Under topological conjugacy, singular points of the MEF must be
  mapped onto each other, preserving the cardinality of the fibers.
  The set of points in the MEF having a singular fiber (with more than
  one element) is given by
  $S = (\RR^2 \times \partial W) / \mathcal{L}$, where $\partial W$ is
  the set of all boundary points of the window (including internal
  boundaries), and $\RR^2$ are the translations in the direction of
  direct space. This is a subset of measure zero of the MEF
  $(\RR^2\times\RR^2) / \mathcal{L}$. Due to the fractal boundaries, a
  reflection preserving the total window cannot also preserve the set
  $S$. This means that translations of the hull in direct space
  direction cannot commute with a reflection, for any direction of
  projection, even if they do so for the MEF itself. Hence, a
  reflection can preserve the spectrum under certain conditions, but
  it cannot be a topological conjugacy.  \exend
\end{remark}

As an application, we determine the reprojected control points for the
Hat tiling. Figure~\ref{fig:Hats-meta-dots} shows a Hat tiling with
superimposed meta-tiles, along with their control points. These
control points are determined as follows. Let $\{t_1, t_2, t_3, t_4\}$
be a basis of the return module $R$ of the CAP tiling, consisting of
actual return vectors. For each $t_i$, one then finds a patch in a CAP
tiling containing two translation-equivalent tiles which are a vector
$t_i$ apart. There is then an equivalent patch in the Hat meta-tile
tiling, where the two equivalent tiles are a vector $t_i^{\ts\prime}$
apart. For instance, on the right of Figure~\ref{fig:Hats-meta-dots},
there are three hexagon meta-tiles with an orange edge at their lower
right. Their distances, together with the corresponding preimages in
the CAP tiling, could be used as pairs $(t^{}_i, t_i^{\ts\prime})$
(complemented with rotated such pairs).  The map
$t^{}_i \mapsto t_i^{\ts\prime}$ can now be extended linearly to a map
from the return module of the CAP tiling (of rank $4$) to the return
module of the Hat tiling (of rank $2$). This map is the reprojection
map for the control points.  Each control point of a CAP tiling is
mapped to a unique control point of the Hat tiling. From the (colored)
control point set of the Hat tiling, the Hat tiling itself can be
locally reconstructed.  Consequently, the Hat tiling and the colored
Delone set with the reprojected control points are MLD. Note that if
the control points are distributed over several orbits under the
return module, the reprojection procedure becomes considerably more
involved.

\section{Comparing different shapes}\label{sec:compare}

In this section, we compare several versions of the Hat tiling, in
particular the CAP tiling and the four named tilings from \cite{Hat}
whose meta-tile vertices live on a lattice: the Chevron, the Comet,
the Turtle and the original Hat. We will compute the expansions and
rotations needed to make these versions topologically conjugate and
see how their underlying lattices are rotated relative to one
another. We will compute the return module for each one and see how
its dual sits inside the already computed spectrum of the CAP tiling.
Note that we work here with the coordinates of the \emph{original} Hat
and meta-tiles, before rescaling the return module as for the
computation of the window in Figure~\ref{fig:window}.

Finally, we consider arbitrary elements of the Hat family considered
in \cite{Hat}, with all tile edges either being vertical or
horizontal, up to rotation by multiples of $60$ degrees. For each such
shape, the spectrum for the resulting tiling by meta-tiles is rotated
relative to the coordinate axes. The spectrum for the other way to
tile the plane, using a hierarchical system of anti-meta-tiles, is
rotated in the opposite direction by the same angle. As long as the
angle of rotation is not a multiple of $30$ degrees, the two rotated
patterns are different. That is, it is usually possible to distinguish
between the two LI classes of possible tilings by spectral means. This
is the case for all but two members of the Hat family: the Turtle and
a shape that has not yet been named.

The geometry of the Hat tile is shown in \cite[Fig.~3.2]{Hat}. In the
basic shape, the orange edges have length $\sqrt{3}/2$, while the
black edges have length $1/2$. More generally, we can let the orange
and black edges involve displacements by complex numbers $\alpha$ and
$\beta$. The Hat is then a polygon whose displacements are
\[
   \alpha, \, \xi \alpha, \, -\ii\xi \beta, \, -\ii\xi^2\beta, \, 
   -\xi^2 \alpha, \, -\xi \alpha, \, -\ii\xi \beta, \, -\ii\beta, \, 
   -\alpha, \, \xi^2\alpha, \, \ii\xi^2\beta, \, 2 \ii \xi \beta, \, 
   \text{and } \ii\beta \ts . 
\]
Varying $\alpha$ and $\beta$ gives our (real) $4$-parameter
family of shape changes that respect rotational symmetry.

In order for an anti-Hat to fit next to a Hat, the edges of the
anti-Hat must also involve $\alpha$ and $\beta$. However, the
reflection of the Hat about the horizontal axis has edge vectors
$\bar \alpha$ and $-\ii \bar \beta$ times powers of $\xi$. That is,
the anti-Hat can only be the reflection of the Hat about a horizontal
axis if $\alpha$ and $\beta$ are real. This is why the authors of
\cite{Hat} only considered real ratios $\alpha : \beta$. We will
eventually consider arbitrary ratios $\alpha:\beta$, but for now we
consider four named versions in which the vertices of the meta-tiles
lie on the triangular lattice $\ZZ[\xi]$, and the self-similar CAP
tiling as well as Socolar's version from \cite{Soc}.

\begin{itemize}
\item When $\alpha=1$ and $\beta=0$, the Hat degenerates to a Chevron,
  which is the union of four equilateral triangles with a common
  vertex. After rotating by $60$ degrees (which is merely changing
  which orientation is considered standard), the shape
  $(A, B, X, \fe, L)$ is given by the row vector
\[
  (1+\xi, 1+\xi, 1, 1, 0) \, = \, \bmu^{}_1+\bmu^{}_2 \ts .
\]

\item The original Hat has $\alpha=\sqrt{3}/2$ and
  $\beta=1/2$. After rotating by $90$ degrees, our shape vector is
\[
  (3, 3, 1, 1, 1) \, = \, \bmu^{}_1 + \bmu^{}_2 + \bmu^{}_3
  \, \sim \, \myfrac{3}{2} \bmu^{}_1 + \bmu^{}_2 \ts ,  
\]
   where $x\sim y$ means that $x$ and $y$ are cohomologous.

\item The Turtle has $\alpha=1/2$ and $\beta=\sqrt{3}/2$. After
  rotating by $120$ degrees, our shape vector is
\[
  \bigl( 2(1+\xi^5), 2(1+\xi^5), 1, 1, 1+\xi^5 \bigr)
     \, = \, \bmu^{}_1+\bmu^{}_2 + (1+\xi^5) \bmu^{}_3
     \, \sim \, \myfrac{3+\xi^5}{2} \bmu^{}_1 + \bmu^{}_2 \ts .
\]

\item When $\alpha=0$ and $\beta=1$, the Hat degenerates to a
  Comet, which is the union of eight equilateral triangles.  After
  rotating by $90$ degrees, $(A,B,X,\fe,L)$ equals
\[
  (\xi, \xi, \xi, \xi, 2) \, = \, \xi(\bmu^{}_1+\bmu^{}_2) + 2 \bmu^{}_3
  \, \sim \, (\xi+1)\bmu^{}_1 + \xi \bmu^{}_2 \ts .
\]

\item As noted earlier, the CAP tiling has shape vector
\[
  (3\phi, 3\phi, 1, \phi+\xi, 2\phi) \, = \,
  \bmu^{}_1 + (\phi+\xi)\bmu^{}_2  + 2\phi \bmu^{}_3 \, \sim \,
  (1+\phi) \bmu^{}_1 + (\phi+\xi)\bmu^{}_2 \ts.
\]
\item We note in passing that the shape change used to describe
  Socolar's golden Key tiles \cite{Soc} is described by the shape
  vector
\[
\begin{split}    
  (3\phi+2-\xi, 3\phi+2-\xi,0, \phi+\xi, 2\phi+2) \, & = \,  
  (\phi+\xi)\bmu^{}_2  + (2\phi+2) \bmu^{}_3 \\
  & \sim \, (1+\phi) \bmu^{}_1 + (\phi+\xi)\bmu^{}_2 \ts ,
\end{split}
\]
which is thus cohomologous to that of the CAP tiling.  This implies
that the golden Key tiling is MLD to the CAP tiling.
\end{itemize}

Let us next compute the return modules for each tiling.  The return
module is generated by collections of edges that correspond to loops
in the AP complex. That is, we want chains that are in the kernel of
the boundary map $\partial^{}_1$. A basis for this kernel is
\begin{enumerate}
\item $\; A + r^3 B$ (times powers of $r$). 
\item $\; A + (r^3-r^5)X$ (times powers of $r$). 
\item $\; (1-r^2) \fe$ (times powers of $r$), and 
\item $\; (r+1) \bigl( (1-r^3)X + L \bigr)$ (times powers of $r$). 
\end{enumerate}
However, we must have
\[
  A \, = \, B \, = \, (1+\xi^5)X + (\xi + \xi^2) \fe + (1+ \xi^5) L \ts .
\]
With this,  our return module is generated by 
\[
  (\xi + \xi^2) \fe + (1+\xi^5)L, \quad (1-\xi^2) \fe,
  \; \hbox{ and } \, (\xi+1)(2X+L) \ts ,
\]
times powers of $\xi$. By taking linear combinations, this reduces to 
\[
  (\xi+1) \fe, \quad (\xi+1)L, \; \hbox{ and } \, 2(\xi+1)X  .
\]
\begin{itemize} 
\item In the Chevron, Hat, Turtle and Comet, $X=\fe$ is a power of
  $\xi$, while $L$ is in $\ZZ[\xi]$. This makes the return module
  $(1+\xi) \ZZ[\xi]$.  These are the only members of the Hat family of
  tilings where the return module has rank 2.
\item In the CAP tiling, the generators are $(1+\xi)$ times 2,
  $\phi+\xi$ and $2\phi$.  We could just as well take the generators
  to be $(1+\xi)$ times 2, $\phi-\xi$ and $2\phi$. The resulting
  module is invariant under multiplication by $\xi$ and also by $\phi$,
  the latter because
\[
  \phi \ts (\phi-\xi) \, = \, 2 + \xi^5 (\phi-\xi) \ts ,
\]
which is to say that it is a $\ZZ[\xi, \phi]$-module. However, over
$\ZZ[\xi,\phi]$, the numbers $2$ and $2\phi$ are multiples of
$\phi-\xi$, since
\[
  (\phi-\xi)(\phi-\xi^5) \, = \, 2 \ts .
\]
Thus, our module is $(1+\xi)(\phi-\xi) \ZZ[\xi, \phi]$. Since
$\phi^2(1+\xi)(\phi-\xi)=3\phi+2-\xi=t^{}_{0}$, 
see Eq.~\eqref{eq:t-vec},
and $\phi^2$ is a unit
of $\ZZ[\xi, \phi]$, this agrees with our findings in Section~3.
 \end{itemize}

 Next, we determine the rescalings needed to make these tilings
 conjugate.  We wish to decompose the cohomology class $S$ of each
 shape vector in terms of the left eigenvectors
 $\ell^{}_\lambda = (\lambda -\xi)\bmu^{}_1 + 2 \bmu^{}_2$ of the
 substitution operator as in equation (\ref{eq:eigen}), where
 $\lambda = \phi^{\pm 2}$. We get the coefficient of
 $\ell^{}_{\lambda}$ by taking an inner product of $S$ with the
 right eigenvector $r^{}_{\lambda}$ and normalizing, thus obtaining
 the coefficient of $\ell^{}_{\lambda}$ as
\[
  \frac{S \cdot r^{}_{\lambda}}{\ell^{}_{\lambda} \cdot
    r^{}_{\lambda}} \, = \,
  \frac{S \cdot r^{}_{\lambda}}{(3-2\xi)\lambda + (3 \xi-2)} \ts .
\]
The values of the inner product $S \cdot r^{}_{\lambda}$ for
$\lambda=\phi^2$ and $\lambda=\phi^{-2}$ are shown in
Table~\ref{tab:compare}.

\begin{table}[t]
  \caption{A comparison of the tilings discussed in this paper. The
    second and third columns are the coefficients of $\bmu^{}_1$ and
    $\bmu^{}_2$, while the last two columns
    show the inner product of the shape class $S$ and the 
    right eigenvectors $r^{}_\lambda$
    from \eqref{eq:eigen}.
    \label{tab:compare}}
\renewcommand{\arraystretch}{1.5}
\begin{tabular}{|c|c|c|c|c|}\hline
  Tiling & $\bmu^{}_1$ & $\bmu^{}_2$ & $S \cdot r_{\phi^2}$
         & $S \cdot r_{\phi^{-2}}$ \\ \hline \hline
 Chevron & 1 & 1 & $\phi^2$
        & $\phi^{-2}$ \\ \hline
        Hat & 3/2 & 1 & $-\frac{1}{2} \xi \phi^2 (\phi-\xi^5)^3$
      & $\frac{1}{2} \xi \phi^{-2} (\phi-\xi)^3$ \\ \hline 
        Turtle & $(3+\xi^5)/2$ & 1 &$\frac{\sqrt{5}}{2} \ts
                            \xi^5\phi^2(\phi-\xi^5)$   
        & $\frac{\sqrt{5}}{2} \ts \xi^5 \phi^{-2} (\phi-\xi)$ \\ \hline
 Comet & $1 + \xi^5$ & 1 & $\phi^2 (\phi-\xi) $ &
        $-\phi^{-2} (\phi - \xi^5)$  \\ \hline  
  CAP & $\phi+1$ & $\phi+\xi$ & $\sqrt{5} \, \phi^2 $
       & 0 \\ \hline
\end{tabular}
\end{table}

From here on, we take the CAP tiling as our reference. To make the
other tilings topologically conjugate to it, we must multiply the
shape vectors of the Chevron, Hat, Turtle, and Comet by $\sqrt{5}$,
$\xi^2\sqrt{5}\ts (\phi-\xi)^3/4$, $\xi (\phi-\xi)$, and
$\sqrt{5} \ts (\phi - \xi^5)/2$, respectively. Note that the argument
of $\phi - \xi^5$ is $\arctan \bigl( \sqrt{3/5\ts}\, \bigr)$, which is
not a rational angle. With these rotations (and rescalings), the
underlying lattices of the Comet, Hat, Turtle, and Chevron have become
incommensurate. What connects them is the higher-rank return module of
the CAP tiling.  \smallskip

Let $L^{}_0=\ZZ [\xi]$ be the standard triangular lattice, so
$L_0^* = \frac{2\ts \ii}{\sqrt{3}} L_0$ is its dual. The spectrum of
the CAP tiling is the dual of the return module
\[ (1+\xi)(\phi-\xi) (L_0 \oplus \phi L_0) \, = \,
  (1+\xi)(\phi-\xi) \ZZ[\xi,\phi] \ts ,
\]
and is therefore 
\[
  \frac{(1+\xi^5)^{-1}(\phi-\xi^5)^{-1}}{\sqrt{5}}
  (L_0^* \oplus \phi L_0^*) \,  = \, 
  \frac{(1+\xi)(\phi-\xi)}{6 \sqrt{5}}
  \bigl( L_0^* \oplus \phi L_0^* \bigr) .
\]
For the computation of the window (Figure~\ref{fig:window}), we have
divided by the prefactor $(1+\xi)(\phi-\xi)$ (and the unit $\phi^2$).

Of course, the duals to the return modules of the Chevron, Hat, Turtle
and Comet tilings must be in the spectrum of those tiling spaces, and
so must be in the spectrum of the CAP tiling.  We check that this is
indeed the case, despite the incommensurate nature of those return
modules.
\begin{itemize} 
\item The return module of the rescaled Chevron is
  $\sqrt{5}(1+\xi) L_0$, whose dual is
  $\frac{1}{3\sqrt{5}} (1+\xi) L_0^*$.
\item The return module of the rescaled Hat is
  $\frac{\sqrt{5}}{4} (1+\xi)(\phi - \xi)^3 L_0$, whose dual is \\
  $\frac{1}{6\sqrt{5}} (1+\xi)(\phi - \xi)^3 L_0^*$. 

\item The return module of the rescaled Turtle is
  $(1+\xi)(\phi-\xi) L_0$, whose dual is \\
  $\frac{1}{6} (1+\xi)(\phi-\xi) L_0^*$.  

\item The return module of the rescaled Comet is
  $\frac{\sqrt{5}}{2} (1+\xi)(\phi - \xi^5) L_0$, whose dual is \\
  $\frac{1}{3\sqrt{5}} (1+\xi)(\phi - \xi^5) L_0^*$. 
\end{itemize}

To see that these are all contained in
$\frac{1}{6\sqrt{5}} (1+\xi)(\phi - \xi) (L_0^* \oplus \phi L_0^*)$,
we note that
\begin{equation}
  \begin{split}
  \frac{1+\xi}{3 \sqrt{5}}
  & \, = \, (\phi-\xi^5)\, \frac{(1+\xi)(\phi - \xi)}{6\sqrt{5}} \ts ,
   \\  \frac{(1+\xi)(\phi-\xi)^3}{6\sqrt{5}}
  & \, = \,   (\phi - \xi)^2 \, \frac{(1+\xi)(\phi - \xi)}{6 \sqrt{5}} \ts ,
   \\ \frac{(1+\xi)(\phi-\xi^5)}{6}
  & \, = \,  (2\phi-1) \, \frac{(1+\xi)(\phi - \xi)}{6\sqrt{5}} \ts ,
   \\ \frac{(1+\xi)(\phi-\xi^5)}{3\sqrt{5}}
  & \, = \,  (\phi-\xi^5)^2 \, \frac{(1+\xi)(\phi - \xi)}{6\sqrt{5}} \ts  .
  \end{split}
\end{equation}

Finally, we consider the spectra of the entire family of Hat-like
tilings considered in \cite{Hat} and prove Theorem~\ref{thm:reflect}.
For this purpose, we express everything in terms of the (real)
parameters $\alpha$ and $\beta$.  In order to consider all of these
models on the same footing, we declare the standard orientation of the
tile to be the one where the edges of length $\alpha$ and $\beta$ are
horizontal (resp.\ vertical), up to rotation by multiples of $60$
degrees. (For some of the named tilings, this disagrees with our
previous conventions.) With this choice, the displacements of the five
basic edges $A$, $B$, $X$, $\fe$ and $L$ in terms of $\alpha$ and
$\beta$ are
\begin{align}
  A \, = \,  B & \, = \,  (\xi+\xi^2)\alpha + 3\ts \ii
                 \beta \ts , \notag \\ 
  X \nts \, = \, \fe & \, = \,  \xi(\alpha + \ii \beta) \ts , \\ 
  L & \, = \,  2\ts \ii \beta \ts . \notag
\end{align}
Using Eq.~\eqref{eq:threemu}, our shape class is then 
\begin{equation}
  \begin{split}
  S \, & = \,   \alpha(\xi+\xi^2,\xi+\xi^2,\xi, \xi, 0 ) 
          + \ii \beta(3, 3, \xi, \xi, 2) \\[1mm]
    & = \,  (\alpha + \ii \beta) (\xi \bmu^{}_1 + \xi \bmu^{}_2) +
          2\ts \ii \beta \bmu^{}_3, 
  \end{split}
\end{equation}
which is cohomologous to
$ \xi(\alpha + \ii \beta) (\bmu^{}_1 + \bmu^{}_2) + \ii \beta
\bmu^{}_1 $.  Note that we can achieve any linear combination of
$\bmu^{}_1$ and $\bmu^{}_2$ by adjusting $\alpha$ and $\beta$
appropriately. This shows that all shape changes that preserve
rotational symmetry are MLD to changes in $\alpha$ and $\beta$.

We now compute how the spectrum of this tiling aligns with the
coordinate axes.  Taking the inner product of the shape class $S$ with
$r^{}_{\phi^2}$ from \eqref{eq:eigen} gives
\[
  \xi \phi^2(\alpha+i\beta) + \ii\beta(\phi^2-\xi) \, = \,
  \xi\phi^2 \bigl( \alpha + \ii\beta(\phi-\xi) \bigr) .
\]
That is, the $(\alpha,\beta)$ tiling is topologically conjugate to a
rescaling of the CAP tiling by the factor
$\xi \bigl( \alpha + \ii\beta(\phi-\xi) \bigr)/\sqrt{5}$. The factors
of $\xi$ and $\sqrt{5}$ are irrelevant, as the tilings are invariant
under rotation by $60$ degrees and we are keeping track of angles, not
scale.

The spectrum of the CAP tiling is itself rotated by the argument of
$(\phi-\xi)$ relative to the axes of the CAP tiling,\footnote{The
  factor of $(1+\xi)=\sqrt{3} \ii\ts \xi^5$ cancels the factor of
  $\ii$ in $L_0^* = \frac{2\ii}{\sqrt{3}} L_0$.}  so the spectrum of
the $(\alpha, \beta)$ tiling is rotated by the argument of
$\bigl( \alpha + \ii \beta(\phi - \xi) \bigr) (\phi-\xi)$ relative to
the original $x$ axis. A simple computation shows that
\begin{equation}
  \bigl( \alpha + \ii\beta(\phi-\xi) \bigr) (\phi - \xi)
  \, = \, \myfrac{1}{2} \bigl( \sqrt{5} (\alpha + \sqrt{3} \beta)
        + \ii (\beta - \sqrt{3}\alpha) \bigr).
\end{equation}
For the argument of this number to be a multiple of $30$ degrees, the
ratio
\[
  \frac{\beta-\sqrt{3} \alpha}{\sqrt{5} (\alpha + \sqrt{3} \beta)}
\]
must be $0$, $\pm 1/\sqrt{3}$, or $\pm \sqrt{3}$. 

Since both $\alpha$ and $\beta$ are non-negative, the ratio is always
between $-\sqrt{3/5}$ and $1/\sqrt{15}$, and so can never equal
$\sqrt{1/3}$ or $\pm \sqrt{3}$. That leaves $-1/\sqrt{3}$, which is
achieved when
\[
  \beta \, = \, \frac{\sqrt{5}-2}{\sqrt{3}} \alpha \ts ,
\]
and $0$, which is achieved when 
\[
  \beta \, = \, \sqrt{3}\ts \alpha \ts .
\] 
The shape corresponding to the first ratio is similar to the Chevron,
since $(\sqrt{5}-2)/\sqrt{3} \approx 0.136$ is small. This shape does
not (yet) have a name. The shape corresponding to the second ratio is
the Turtle. Although the two LI classes for the Turtle have the same
spectrum, the argument of Remark~\ref{rem:not} shows that they are
\emph{not} topologically conjugate, nor are the two LI classes of the
other shape.

This completes the proof of Theorem~\ref{thm:reflect}.

\section{Some comments and questions}\label{sec:final}

Unlike previous examples, which often were limit-periodic, the CAP
tiling is \emph{quasiperiodic} in the sense of mean almost-periodic
measures, which have recently been analyzed by Lenz, Spindeler and
Strungaru \cite{LSS}.  As such, its pure-point diffraction corresponds
to the mean almost periodicity of the (possibly color-weighted) Dirac
comb on the control points.  This quasiperiodic nature is preserved
for the reprojected point sets, and also the Hat tiling is thus
quasiperiodic in this sense.

Consequently, the diffraction measure of the Hat tiling is also pure
point. Since the control points live on a lattice, the corresponding
diffraction measure is the lattice-periodic repetition of a finite
motif (or block), with the dual lattice as lattice of periods, by an
application of \cite[Thm.~10.3]{TAO}. This is an interesting feature
in the light of the intrinsic aperiodicity, though well known from
other examples such as the aperiodic Fibonacci chain with two types of
weights on $\ZZ$, which then has a periodic diffraction with dense
peaks.

For the Hat, the motif can be represented as a finite point measure on
a fundamental domain of the dual lattice that consists of a dense set
of Dirac peaks. They then encode the modulation due to the
quasiperiodic nature of the tiling, as also analyzed numerically in
\cite{Soc}. Some more detailed analysis of this feature seems of
interest.

The quasiperiodic nature of the Hat tiling with its Euclidean internal
space leads to a deformation structure that differs fundamentally from
that of a limit-periodic system with its $p$-adic (or related)
internal space. Specifically, having a Euclidean internal space allows
for reprojections. Reprojections that preserve rotational symmetry
then allow for an infinite family of monotiles.

We close with several open questions concerning the development of 
monotiles. 
\begin{itemize} 
\item Do there exist other monotiles, not conjugate to the Hat, whose
  quasiperiodicity similarly implies the existence of an infinite
  family of monotiles?
\item Does there exist a disk-like monotile for which the tilings by
  isometric copies of the tile define a unique and aperiodic LI class?
\item Does there exist an aperiodic disk-like chiral
  monotile whose tilings only involve rotations and translations (but
  not reflections) of the basic tile?
\item Does there exist a disk-like chiral monotile for which the
  tilings by rotations and translations (but not reflections) of the
  tile define a unique and aperiodic LI class?
\end{itemize}

Note: Within a month of the initial release of this paper, Smith et al
\cite{Spectre} answered the third question to the affirmative. They
constructed a new aperiodic monotile, called the Spectre, whose
tilings use $12$ rotations of the basic tile but no
reflections. Tilings built from these tiles form two LI classes, each
$6$-fold rotationally symmetric, but rotated against each other by
$2\pi/12$. The Spectre tiling is quasiperiodic (in the sense we used
above) and admits shape conjugacies that respect $6$-fold rotational
symmetry \cite{BGMS}.  However, these deformations are not $12$-fold
rotationally symmetric and do \emph{not} yield an infinite family of
monotiles.

\section*{Acknowledgments}

It is our pleasure to thank Dirk Frettl\"{o}h, Craig Kaplan and Jan
Maz\'{a}\v{c} for discussions.  This work was supported by the German
Research Council (Deutsche Forschungsgemeinschaft, DFG) under SFB-TRR
358/1 (2023) -- 491392403.  \bigskip

\end{document}